\documentclass[12pt,english]{amsart}
\usepackage[utf8]{inputenc}
\usepackage[a4paper]{geometry}
\geometry{verbose}
\usepackage{babel}
\usepackage{verbatim}
\usepackage{amscd}
\usepackage{amstext}
\usepackage{amsthm}
\usepackage{amssymb}
\usepackage{mathdots}
\usepackage{esint}
\usepackage[unicode=true,
 bookmarks=false,
 breaklinks=false,pdfborder={0 0 1},backref=false,colorlinks=false]
 {hyperref}
\usepackage{breakurl}

\makeatletter
\numberwithin{equation}{section}
\theoremstyle{plain}
\newtheorem{thm}{\protect\theoremname}[section]
  \theoremstyle{definition}
  \newtheorem{defn}[thm]{\protect\definitionname}
  \theoremstyle{remark}
  \newtheorem*{rem*}{\protect\remarkname}
  \theoremstyle{definition}
  \newtheorem{example}[thm]{\protect\examplename}
  \theoremstyle{remark}
  \newtheorem{rem}[thm]{\protect\remarkname}
  \theoremstyle{plain}
  \newtheorem{prop}[thm]{\protect\propositionname}
  \theoremstyle{plain}
  \newtheorem{lem}[thm]{\protect\lemmaname}
  \theoremstyle{plain}
  \newtheorem{cor}[thm]{\protect\corollaryname}

\geometry{verbose,tmargin=1in,bmargin=1in,lmargin=1in,rmargin=1in}
\usepackage{calc}
\usepackage{amstext}
\usepackage{amscd}
\usepackage{breakurl}
\usepackage{fancyhdr}
\usepackage{placeins}
\usepackage{babel}
\usepackage{amsfonts}

\theoremstyle{plain}
  \newtheorem*{thma}{Theorem A}
\newtheorem*{thmb}{Theorem B}

\title[Homotopy type of moduli spaces]{Homotopy type of moduli spaces
  of G-Higgs bundles and reducibility of the nilpotent cone}

\author[C. Florentino]{C. Florentino}

\address{Departamento de Matem\'{a}tica, Faculdade de Ci\^{e}ncias, Universidade de Lisboa,  Edf. C6, Campo Grande 1749-016 Lisboa, Portugal}

\email{caflorentino@fc.ul.pt}

\author[P. B. Gothen]{P. B. Gothen}
\address{Centro de
  Matem\'atica da Universidade do Porto,
Faculdade de Ci\^encias da Universidade do Porto,
Rua do Campo Alegre, 4169-007 Porto, Portugal }
\email{pbgothen@fc.up.pt}

\author[A. Nozad]{A. Nozad}

\address{School of Mathematics, Institute for Research in Fundamental Sciences (IPM), P.O. Box: 19395-5746, Tehran, Iran.}

\email{anozad@ipm.ir}

\thanks{ This work was partially supported by CMUP
  (UID/MAT/00144/2013), CMAF-CIO (MAT/04561), the projects PTDC/MAT-GEO/2823/2014 
  and EXCL/MAT-GEO/0222/2012 funded
  by FCT (Portugal) with national funds and a grant from IPM.
  The authors acknow\-ledge support from U.S. National Science
  Foundation grants DMS 1107452, 1107263, 1107367 "RNMS: GEometric
  structures And Representation varieties" (the GEAR Network)}
\subjclass{14H60, 32L05}
\keywords{Higgs bundles, Character varieties, Nilpotent cone, Topology of moduli spaces}


\newcommand{\suchthat}{\;\;|\;\;}
\newcommand{\abs}[1]{\lvert#1\rvert}
\renewcommand{\leq}{\leqslant}

\renewcommand{\geq}{\geqslant}

\newcommand{\R}{\mathbb{R}}

\newcommand{\Z}{\mathbb{Z}}
\renewcommand{\C}{\mathbb{C}}

\newcommand{\lie}{\mathfrak}

\newcommand{\SU}{\mathrm{SU}}
\renewcommand{\U}{\mathrm{U}}
\newcommand{\GL}{\mathrm{GL}}
\newcommand{\SL}{\mathrm{SL}}
\newcommand{\SO}{\mathrm{SO}}
\newcommand{\Sp}{\mathrm{Sp}}

\newcommand{\norm}[1]{\lVert#1\rVert}

\DeclareMathOperator{\Ad}{Ad}

\DeclareMathOperator{\tr}{tr}
\DeclareMathOperator{\rk}{rk}
\DeclareMathOperator{\im}{im}

\DeclareMathOperator{\Hom}{Hom}
\DeclareMathOperator{\End}{End}





  \providecommand{\corollaryname}{Corollary}
  \providecommand{\definitionname}{Definition}
  \providecommand{\examplename}{Example}
  \providecommand{\lemmaname}{Lemma}
  \providecommand{\propositionname}{Proposition}
  \providecommand{\remarkname}{Remark}
\providecommand{\theoremname}{Theorem}

\makeatother

  \providecommand{\corollaryname}{Corollary}
  \providecommand{\definitionname}{Definition}
  \providecommand{\examplename}{Example}
  \providecommand{\lemmaname}{Lemma}
  \providecommand{\propositionname}{Proposition}
  \providecommand{\remarkname}{Remark}
\providecommand{\theoremname}{Theorem}

\begin{document}
\begin{abstract}
Let $G$ be a real reductive Lie group, and $H^{\mathbb{C}}$ the
complexification of its maximal compact subgroup $H\subset G$. We
consider classes of semistable $G$-Higgs bundles over a Riemann surface
$X$ of genus $g\geq2$ whose underlying $H^{\mathbb{C}}$-principal
bundle is unstable. This allows us to find obstructions to a deformation
retract from the moduli space of $G$-Higgs bundles over $X$ to the
moduli space of $H^{\mathbb{C}}$-bundles over $X$, in contrast with
the situation when $g=1$, and to show reducibility of the nilpotent
cone of the moduli space of $G$-Higgs bundles, for $G$ complex. 
\end{abstract}

\maketitle

\section{Introduction}

A \emph{Higgs bundle} on a Riemann surface $X$ is a pair $(E,\varphi)$,
where $E$ is a rank $n$ holomorphic vector bundle over $X$ and
$\varphi\in H^{0}(\End(E)\otimes K)$ is a holomorphic endomorphism
of $E$ twisted by the canonical bundle $K$ of $X$. Higgs bundles
appeared first in the work of Hitchin \cite{Hi87} and Simpson \cite{Si92,Si88}.
The non-abelian Hodge Theorem \cite{Co88,Do87,Hi87,Si88} identifies
the moduli space of Higgs bundles with the \emph{character variety}
for representations of the fundamental group of $X$ into $\GL(n,\mathbb{C})$.

The appropriate objects for extending the non-abelian Hodge Theorem
to representations of the fundamental group in a \emph{real} reductive
Lie group $G$ (see, e.g., \cite{Hi92,GGM09,Go14}) are called \emph{$G$-Higgs
bundles}. There are natural notions of stability, semistability, and
polystability for $G$-Higgs bundles, leading to corresponding moduli
spaces $\mathcal{M}(G)$ (see \cite{GGM09} for the general theory).
Again, there is an identification between $\mathcal{M}(G)$ and the
moduli space of flat $G$-connections on $X$.

Motivated partially by this identification, the moduli space of $G$-Higgs
bundles has been extensively studied. When $G$ is a complex semisimple
Lie group Biswas and Florentino proved in \cite{BF} that the moduli
space of topologically trivial principal $G$-bundles over a compact
Riemann surface (which are actually $H$-Higgs bundles, where $H$
is the maximal compact subgroup of $G$) is not a deformation retraction
of the moduli space of topologically trivial $G$-Higgs bundles. This
result contrasts with the main theorem of Florentino and Lawton \cite{FL}
which says that the moduli space of flat $H$-connections on an \emph{open
surface $X$ }is a strong deformation retraction of the moduli space
of flat $G$-connections on $X$, for complex reductive $G$.

Our aim in this paper is to generalize the above mentioned theorem
of Biswas and Florentino to the case of real reductive Lie groups.
Using the non-abelian Hodge theorem, the question is to prove that
the moduli spaces of semistable principal $H^{\mathbb{C}}$-bundles,
which we denote by $\mathcal{N}(H^{\mathbb{C}})$, is not a deformation
retraction of the moduli spaces of semistable $G$-Higgs bundles $\mathcal{M}(G)$,
where $H^{\C}$ is the complexification of $H$. We recall that the
topological invariants of the underlying principal bundles label unions
of connected components of the moduli spaces, so in order to study
deformation retraction from $\mathcal{M}(G)$ to $\mathcal{N}(H^{\mathbb{C}})$
we should consider separately each topological type. In this paper,
we address the case of trivial topological type.

Our strategy is as follows. We use the $\C^{\ast}$-action on the
moduli space of $G$-Higgs bundles, given by multiplication of the
Higgs field, and show (Proposition \ref{prop:deformation-nil}) that
it provides a deformation retraction onto the \emph{nilpotent cone}:
the pre-image of zero under the Hitchin map, defined in section \ref{sec:nilpotent-cone}.
Therefore, we reduce the question to finding obstructions to a deformation
from the nilpotent cone to $\mathcal{N}(H^{\mathbb{C}})$. Then we
prove that such obstructions are semistable $G$-Higgs bundles whose
underlying $H^{\mathbb{C}}$-bundle is unstable and we show existence
of these obstructions by using the construction of \cite{GPR}, stated
in Proposition \ref{existence-unstable-bundle}. This result allows
us also to deduce the reducibility of the nilpotent cone of the moduli
space of $G$-Higgs bundles when $G$ is a connected reductive complex
Lie group.

More precisely, our main results are the following theorems (see Theorems
\ref{thm:reducibility-nilpotent} and \ref{thm:homotopy-type} below;
note that the moduli spaces may be singular).

\begin{thma} Let $G$ be a non-abelian connected reductive complex
Lie group. Then the nilpotent cone in the moduli space of $G$-Higgs
bundles of trivial topological type is not irreducible. \end{thma}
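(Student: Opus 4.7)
My plan is to combine a dimension count for the nilpotent cone with the existence result for polystable $G$-Higgs bundles having unstable underlying principal bundle (Proposition~\ref{existence-unstable-bundle}).

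Since $G$ is complex reductive, the complexification $H^{\mathbb{C}}$ of a maximal compact subgroup $H\subset G$ coincides with $G$, so $\mathcal{N}(H^{\mathbb{C}})=\mathcal{N}(G)$ is the moduli space of semistable principal $G$-bundles. The first step is to note that the zero-section map
\[
\iota\colon\mathcal{N}(G)\lto\mathcal{M}(G),\qquad E\longmapsto[(E,0)],
\]
is a closed embedding (since $(E,0)$ is a semistable Higgs bundle if and only if $E$ is a semistable principal bundle) whose image is contained in the nilpotent cone $h^{-1}(0)\subset\mathcal{M}(G)$. For $G$ connected reductive, the trivial topological component of $\mathcal{N}(G)$ is irreducible.

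Next I would invoke the classical fact that the nilpotent cone is Lagrangian with respect to the natural holomorphic symplectic structure on the smooth locus of $\mathcal{M}(G)$, so all of its irreducible components have dimension $\tfrac{1}{2}\dim\mathcal{M}(G)$, which coincides with $\dim\mathcal{N}(G)$. Thus $\iota(\mathcal{N}(G))$ is an irreducible closed subset of $h^{-1}(0)$ of top dimension.

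The crucial step is then to exhibit a point of $h^{-1}(0)$ that does not lie in $\iota(\mathcal{N}(G))$. By Proposition~\ref{existence-unstable-bundle}, since $G$ is non-abelian there exists a polystable $G$-Higgs bundle $(E,\varphi)$ of trivial topological type whose underlying $G$-bundle $E$ is unstable; the construction of \cite{GPR} in fact produces a Hodge (i.e.\ $\C^{\ast}$-fixed) bundle, for which $\varphi$ is automatically nilpotent and $[(E,\varphi)]\in h^{-1}(0)$. Because this polystable representative has $\varphi\neq 0$, its S-equivalence class cannot equal $[(E',0)]$ for any semistable $E'$, so $[(E,\varphi)]\notin\iota(\mathcal{N}(G))$. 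If the nilpotent cone were irreducible, then $\iota(\mathcal{N}(G))$, being an irreducible closed subset of the same dimension, would have to coincide with $h^{-1}(0)$, a contradiction. The principal obstacle I anticipate is verifying the output of the \cite{GPR} construction in the form we need --- namely in trivial topological type, polystable, and with $\varphi$ nilpotent; once this is secured, the Lagrangian dimension comparison closes the argument immediately.
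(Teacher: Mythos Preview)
Your approach is essentially correct but genuinely different from the paper's. The paper does not invoke the Lagrangian property of the nilpotent cone at all. Instead, after producing (via Propositions~\ref{existence-unstable-bundle} and \ref{top-type-complex}) a semistable $(E,\varphi)\in\mathcal{M}_0(G)$ with $E$ unstable, it argues that $\lim_{t\to 0}(E,t\varphi)$ lands in some $\mathcal{F}_\lambda$ with $\lambda\neq 0$, and then uses the existence of \emph{very stable} $G$-bundles \cite[Corollary~5.6]{BR94} to conclude that $\mathcal{F}_0=\mathcal{N}_0(G)$ is not contained in $\bar{D}_\lambda$; this exhibits two distinct components directly. Your route replaces the very-stable-bundle input with the equidimensionality of the nilpotent cone (all components have dimension $\tfrac12\dim\mathcal{M}_0(G)$), which immediately promotes the irreducible closed subvariety $\mathcal{N}_0(G)$ to an irreducible component and then only requires a single point outside it. This is arguably cleaner, but it trades one nontrivial external input (very stable bundles) for another (the Lagrangian property on the possibly singular coarse moduli space), so you should supply a precise reference valid in that generality.

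Two points in your write-up need tightening. First, Proposition~\ref{existence-unstable-bundle} only asserts \emph{semistability} of the extended bundle, not polystability, and the trivial topological type is established separately in Proposition~\ref{top-type-complex}; cite both. Second, you assert that the polystable representative of $(E,\varphi)$ has nonzero Higgs field, but this deserves an argument: if the associated graded of a Jordan--H\"older filtration of $(E,\varphi)$ had zero Higgs field, then $E$ would be filtered (as a principal bundle) with stable subquotients of the correct slope, forcing $E$ to be semistable --- contradicting the choice of $E$. Alternatively, you can observe that the specific bundle produced (the image of the uniformising $\SL(2,\mathbb{C})$-Higgs bundle under $\rho$) is in fact a stable $\mathbb{C}^*$-fixed point, hence already polystable with $\varphi\neq 0$; but this goes beyond what the cited propositions state, so either prove it or use the filtration argument.
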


\begin{thmb} Let $G$ be a non-abelian (real or complex) connected
reductive Lie group of non-Hermitian type or connected simple real
Lie group of Hermitian non-tube type. Then the moduli space of semistable
principal $H^{\C}$-bundles of trivial topological type is not a deformation
retraction of the moduli space of semistable $G$-Higgs bundles of
trivial topological type. \end{thmb}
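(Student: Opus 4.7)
The plan is to combine the $\C^{\ast}$-deformation retraction of Proposition \ref{prop:deformation-nil} with Proposition \ref{existence-unstable-bundle} in order to extract a topological obstruction. Let $\mathcal{N}_{\mathrm{nil}}\subset\mathcal{M}(G)$ denote the nilpotent cone. Proposition \ref{prop:deformation-nil} gives a deformation retraction of $\mathcal{M}(G)$ onto $\mathcal{N}_{\mathrm{nil}}$ via $(E,\varphi)\mapsto(E,t\varphi)$ as $t\to 0$, and $\mathcal{N}(H^{\C})\subset\mathcal{N}_{\mathrm{nil}}$ sits naturally as the zero-section locus $E\mapsto(E,0)$. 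If $\mathcal{N}(H^{\C})$ were a deformation retract of $\mathcal{M}(G)$, then combining with the $\C^{\ast}$-retraction would force $\mathcal{N}(H^{\C})\simeq\mathcal{M}(G)\simeq\mathcal{N}_{\mathrm{nil}}$ as topological spaces; in particular they would share the same rational cohomology. So it suffices to prove that $\mathcal{N}(H^{\C})$ and $\mathcal{N}_{\mathrm{nil}}$ have different Betti numbers.

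Next I would invoke Proposition \ref{existence-unstable-bundle} to obtain a polystable $G$-Higgs bundle $(E,\varphi)$ of trivial topological type with $E$ not semistable as an $H^{\C}$-bundle. Because $(E,0)$ is polystable if and only if $E$ is polystable, one has $\varphi\neq 0$. The $\C^{\ast}$-limit $(E_{0},\varphi_{0}):=\lim_{t\to 0}(E,t\varphi)$ then lies in $\mathcal{N}_{\mathrm{nil}}$, and its underlying $H^{\C}$-bundle, obtained as an associated graded of a $\varphi$-compatible filtration of $E$, retains the Harder--Narasimhan polygon of $E$ and so remains unstable; hence $(E_{0},\varphi_{0})\in\mathcal{N}_{\mathrm{nil}}\setminus\mathcal{N}(H^{\C})$. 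Varying the parameters in the \cite{GPR} construction fills out a positive-dimensional constructible subset of $\mathcal{N}_{\mathrm{nil}}\setminus\mathcal{N}(H^{\C})$, generating an irreducible component $Z\neq\mathcal{N}(H^{\C})$ of the nilpotent cone. In the complex reductive case this already yields Theorem A.

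To conclude Theorem B, I would argue by a top-cohomology count. For $G$ complex reductive, the Hitchin map is a Lagrangian fibration, so $\mathcal{N}_{\mathrm{nil}}$ is equidimensional of dimension $\dim\mathcal{N}(H^{\C})$; the extra component $Z$ therefore contributes an independent class in top rational cohomology, giving $b_{\mathrm{top}}(\mathcal{N}_{\mathrm{nil}})>b_{\mathrm{top}}(\mathcal{N}(H^{\C}))=1$ and contradicting $\mathcal{N}(H^{\C})\simeq\mathcal{N}_{\mathrm{nil}}$. For $G$ real of non-Hermitian type or of Hermitian non-tube type, one either pulls back the argument through the embedding $\mathcal{M}(G)\hookrightarrow\mathcal{M}(G^{\C})$, verifying that the obstruction points produced by Proposition \ref{existence-unstable-bundle} survive in the real locus, or performs a direct local analysis at a smooth point of $Z\setminus\mathcal{N}(H^{\C})$.

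The main obstacle will be precisely this last step: promoting the algebraic statement $Z\neq\mathcal{N}(H^{\C})$ into a topological non-equivalence. The complex case is clean, since the Lagrangian-fibration property immediately provides the equidimensionality needed for the top-Betti-number count. For real $G$, however, the nilpotent cone is not known to be equidimensional in general, and the Hermitian non-tube case brings in additional strata associated with the Toledo invariant that must be disentangled before the same counting argument can be carried out; handling these carefully is expected to require constructions tailored to each family of groups in the statement.
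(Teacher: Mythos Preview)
Your overall strategy matches the paper's: retract $\mathcal{M}_0(G)$ onto the nilpotent cone via Proposition~\ref{prop:deformation-nil}, use Proposition~\ref{existence-unstable-bundle} (together with Propositions~\ref{top-type-complex} and~\ref{pro:topological-type} for the topological type, which you fold in without comment) to produce a fixed point $(E_0,\varphi_0)$ with $\varphi_0\neq 0$, and conclude that the nilpotent cone and $\mathcal{N}_0(H^{\C})$ have different homology. The divergence, and the genuine gap, is in how you carry out that last step.

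For complex $G$ you invoke equidimensionality of the nilpotent cone (via the Lagrangian property of the Hitchin fibration) to run a top-Betti-number count. That is fine. But for real $G$ you have no such equidimensionality statement, and you say so yourself: you propose either to pull back through $\mathcal{M}(G)\hookrightarrow\mathcal{M}(G^{\C})$ or to do a ``direct local analysis'', and you anticipate case-by-case work depending on the family of groups. None of this is actually carried out, so as written the real case is not proved. The paper avoids this entirely with Proposition~\ref{Prop:Deformation-Retraction}, which needs only the existence of \emph{some} nonempty $\mathcal{F}_\lambda$ with $\lambda\neq 0$ and no equidimensionality hypothesis: either $\mathcal{F}_0\not\subset\bar D_\lambda$, in which case Lemma~\ref{lem:h-different-2} (Mayer--Vietoris on the union of two proper closed subvarieties) already distinguishes the homology; or $\mathcal{F}_0\subset\bar D_\lambda$, and then (choosing $\lambda$ of lowest nonzero weight as in Lemma~\ref{lem:wobbly-limit}) the $\C^*$-equivariant surjection $\bar D_\lambda^0\to\mathcal{F}_0$, $(E,\varphi)\mapsto(E,0)$, forces $\dim\bar D_\lambda>\dim\mathcal{F}_0$, and Lemma~\ref{lem:h-different-1} finishes. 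This dichotomy is the idea you are missing; it gives a uniform argument for real and complex $G$ with no group-by-group analysis.

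Two smaller points. First, your justification that $\varphi_0\neq 0$ --- that $E_0$ is an associated graded of a $\varphi$-compatible filtration of $E$ and ``retains the Harder--Narasimhan polygon'' --- is not substantiated and is not what is needed. The paper's argument (Corollary~\ref{cor:retraction}) is simpler: if the limit lay in $\mathcal{F}_0$ then $(E,0)$ would represent a point of the moduli space, forcing $E$ to be semistable, contrary to hypothesis. Second, your passage from ``a positive-dimensional constructible subset'' to ``an irreducible component $Z\neq\mathcal{N}(H^{\C})$'' tacitly assumes $\mathcal{F}_0$ is not contained in the closure of that locus; in the complex case the paper secures this via very stable bundles (Proposition~\ref{rem:nilpotent-element}), but again Proposition~\ref{Prop:Deformation-Retraction} makes even this step unnecessary for Theorem~B.
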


\section{Moduli of Higgs bundles and the nilpotent cone}

\subsection{$G$-Higgs bundles}

Let $X$ be a compact connected Riemann surface of genus $g$, for
$g\geq2$, and let $K=T^{*}X$ be the canonical bundle of $X$. Let
$G$ be a (real or complex) connected reductive Lie group with a choice
of a maximal compact subgroup $H\subset G$, and denote by $H^{\mathbb{C}}$
the complexification of $H$.


By an $H^{\mathbb{C}}$\emph{-bundle over $X$} we always mean a \emph{holomorphic
}principal $H^{\mathbb{C}}$-bundle over $X$. Recall that this is
a holomorphic fibre bundle $\pi:E\to X$ with a holomorphic $H^{\mathbb{C}}$-action
which is free and transitive on each fibre and $E$ is required to
admit holomorphic $H^{\mathbb{C}}$-equivariant local trivializations
$E|_{U}\cong U\times H^{\mathbb{C}}$ over small open sets $U\subset X$.
Denote by $\mathcal{N}(H^{\mathbb{C}})$ the moduli space of semistable
principal $H^{\mathbb{C}}$-bundles over $X$; the construction of
the moduli space can be found in \cite{Ra96}. It is a union of connected
components (see \cite{Ra75}) 
\[
\mathcal{N}(H^{\mathbb{C}})=\coprod_{d}\mathcal{N}_{d}(H^{\mathbb{C}})
\]
indexed by the elements $d\in\pi_{1}(H^{^{\mathbb{C}}})$ which correspond
to topological types of principal $H^{\mathbb{C}}$-bundles $E$ over
$X$. Moreover, for each $d\in\pi_{1}(H^{^{\mathbb{C}}})$, $\mathcal{N}_{d}(H^{\mathbb{C}})$
is non-empty.

If $\mathfrak{h}^{\mathbb{C}}\subset\mathfrak{g}^{\mathbb{C}}$ are
the corresponding Lie algebras, there is a (complexified) Cartan decomposition
\[
\mathfrak{g}^{\mathbb{C}}=\mathfrak{h}^{\mathbb{C}}\oplus\mathfrak{m}^{\mathbb{C}}
\]
where $\mathfrak{m}^{\mathbb{C}}$ is a complex vector space. The
restriction of the adjoint representation $\mathrm{Ad}:G^{\mathbb{C}}\to\mathrm{GL}(\mathfrak{g}^{\mathbb{C}})$
to $H^{\C}$ preserves the Cartan decomposition and induces the \emph{isotropy
representation} of $H^{\mathbb{C}}$ on $\mathfrak{m}^{\mathbb{C}}$:
\begin{equation}
\iota:H^{\mathbb{C}}\to\mathrm{GL}(\mathfrak{m}^{\mathbb{C}}).\label{isotropyrepresentation}
\end{equation}
Given a $H^{\mathbb{C}}$-bundle $E$, denote by $E(\mathfrak{m}^{\mathbb{C}})$
the vector bundle with fibres $\mathfrak{m}^{\mathbb{C}}$ associated
to $E$ via the isotropy representation, i.e., $E(\mathfrak{m}^{\mathbb{C}})=E\times_{\iota}\mathfrak{m}^{\mathbb{C}}$. 
\begin{defn}
A \emph{$G$-Higgs bundle} on a Riemann surface $X$ is a pair $(E,\varphi)$
which consists of a principal $H^{\mathbb{C}}$-bundle $E$ and a
holomorphic section $\varphi$ of the bundle $E(\mathfrak{m}^{\mathbb{C}})\otimes K$.
The section $\varphi$ is called the \emph{Higgs field}. \end{defn}
\begin{rem*}
We have the following particular cases. 
\begin{enumerate}
\item If $G$ is itself a compact group, then $\mathfrak{m}^{\mathbb{C}}=0$,
so the Higgs field is identically zero, and we recover the notion
of principal $H^{\mathbb{C}}=G^{\mathbb{C}}$-bundle. 
\item When $G$ is a complex group, then we have $H^{\mathbb{C}}=G$ and
also $\mathfrak{m}^{\mathbb{C}}=\mathfrak{g}$. So, a \emph{G-Higgs
bundle} is a pair $(E,\varphi)$, where $E$ is a $G$-bundle and
$\varphi\in H^{0}(X,E(\mathfrak{g})\otimes K)=H^{0}(X,\Ad(E)\otimes K)$. 
\item When $G$ is non-compact of Hermitian type there is an almost complex
structure on $\mathfrak{m}^{\mathbb{C}}$ defined by the adjoint action
of a special element $J$ in the center $\mathfrak{z}$ of $\mathfrak{h}$
with $J^{2}=-\mathrm{id}$. The almost complex structure splits $\mathfrak{m}^{\mathbb{C}}$
into $H^{\mathbb{C}}$-invariant $\pm i$-eigenspaces 
\[
\mathfrak{m}^{\mathbb{C}}=\mathfrak{m}^{+}\oplus\mathfrak{m}^{-}
\]
and therefore splits the bundle $E(\mathfrak{m}^{\mathbb{C}})=E(\mathfrak{m}^{+})\oplus E(\mathfrak{m}^{-})$.
Hence the Higgs field decomposes as $\varphi=(\varphi^{+},\varphi^{-})$
where 
\begin{equation}
\varphi^{+}\in H^{0}(X,E(\mathfrak{m}^{+})\otimes K),\quad\varphi^{-}\in H^{0}(X,E(\mathfrak{m}^{-})\otimes K).\label{eq:phi-hermitian-type}
\end{equation}

\end{enumerate}
\end{rem*}

The notion of $G$-Higgs bundle includes several interesting particular
cases. When $G$ is a classical Lie group, $G$-Higgs bundles can
be defined in terms of holomorphic vector bundles with additional
structure, as follows. 
\begin{example}
A $\GL(n,\mathbb{C})$-Higgs bundle on $X$ is a pair $(E,\varphi)$,
where $E$ is a rank $n$ holomorphic vector bundle over $X$ and
$\varphi\in H^{0}(\End(E)\otimes K)$ is a holomorphic endomorphism
of $E$ twisted by $K$. This is the original notion of Higgs bundle
introduced by Hitchin \cite{Hi87}. Similarly, a $\SL(n,\mathbb{C})$-Higgs
bundle is a pair $(E,\varphi)$, where $E\to X$ is a holomorphic
rank $n$ vector bundle with $\det(E)=\mathcal{O}$ and $\varphi\in H^{0}(X,\End(E)\otimes K)$
with $\tr(\varphi)=0$. 
\end{example}

\begin{example}
A $\SO(n,\mathbb{C})$-Higgs bundle is a pair $(E,\varphi)$ where
$E$ is a $\SO(n,\mathbb{C})$-bundle and $\varphi\in H^{0}(E(\mathfrak{so}(n,\mathbb{C}))\otimes K)$.
Using the standard representations of $\SO(n,\mathbb{C})$ in $\mathbb{C}^{n}$
we can associate to $E$ a holomorphic vector bundle $W$ of rank
$n$ with trivial determinant, 
\[
W=E\times_{\SO(n,\mathbb{C})}\mathbb{C}^{n},
\]
together with a non-degenerate symmetric quadratic form $Q\in H^{0}(\mathrm{S}^{2}W^{\ast})$;
we can think of $Q$ as a symmetric holomorphic isomorphism $Q:W\to W^{*}$.
The Higgs field in terms of the vector bundle $W$ is a holomorphic
section $\varphi\in H^{0}(\End(W)\otimes K)$ satisfying $Q(u,\varphi v)=-Q(\varphi u,v)$
and $\mathrm{tr}(\varphi)=0$. 
\end{example}

\begin{example}
Let $G=\SL(n,\mathbb{R})$. The Cartan decomposition of the Lie algebra
is given by 
\[
\mathfrak{sl}(n,\mathbb{R})=\mathfrak{so}(n)\oplus\mathfrak{m},
\]
where $\mathfrak{m}=\{\mbox{symmetric real matrices of trace 0}\}$.
So a $\SL(n,\mathbb{R})$-Higgs bundle is a pair $(E,\varphi)$, where
$E$ is a $\SO(n,\mathbb{C})$-bundle and $\varphi\in H^{0}(E(\mathfrak{m}^{\mathbb{C}})\otimes K)$.
Hence a $\SL(n,\mathbb{R})$-Higgs bundle can be viewed as a triple
$(W,Q,\varphi)$, where $(W,Q)$ is a holomorphic orthogonal bundle
with $\det(W)=\mathcal{O}$, and $\varphi$ is a traceless holomorphic
section of $\End(W)\otimes K$ that is symmetric with respect to $Q$,
i.e. $Q\varphi^{T}Q=\varphi$. 
\end{example}

\subsection{Moduli spaces\label{sub:Stablity-conditions}}

For the construction of moduli spaces, as usual one introduces several
notions of stability. The notions of stability, semistability and
polystability for $G$-Higgs bundles depend on a real parameter $\alpha$
and generalize the usual slope stability condition for Higgs bundles
and Ramanathan's stability condition for principal bundles. In the
present work we consider only the particular case $\alpha=0$, because
this is the relevant value for relating $G$-Higgs bundles to representations
of $\pi_{1}(X)$ via the non-abelian Hodge theorem. Thus we simply
say polystable instead of 0-polystable and likewise for stable and
semistable, and refer the reader to \cite{GGM09} for the general
definitions. 
\begin{rem}
\label{rem:stability} To a $G$-Higgs bundle, for $G\subset\GL(n,\C)$,
we can naturally associate a $\GL(n,\C)$-Higgs bundle. By this correspondence,
semistability of a $G$-Higgs bundle is equivalent to semistability
of the associated $\GL(n,\C)$-Higgs bundle. For stability the situation
is more subtle: it is possible for a stable $G$-Higgs bundle to induce
a strictly semistable $\GL(n,\C)$-Higgs bundle. 
\end{rem}

\subsection{Components of moduli spaces}

To a given $G$-Higgs bundle we can associate the topological invariant
of the underlying $H^{\mathbb{C}}$-bundle. As mentioned before, for
connected $H^{\mathbb{C}}$, topological types are well-known \cite{Ra75}
to be classified by elements of 
\[
\pi_{1}(H^{\mathbb{C}})=\pi_{1}(H)=\pi_{1}(G).
\]

\begin{defn}
For a fixed $d\in\pi_{1}(G)$, \emph{the moduli space of polystable
$G$-Higgs bundles} $\mathcal{M}_{d}(G)$ is defined to be the set
of isomorphism classes of polystable $G$-Higgs bundles $(E,\varphi)$
with $c(E)=d$. 
\end{defn}
These moduli spaces are complex algebraic varieties, due to constructions
of Schmitt \cite{Sc05,Sc08}. We have the disjoint union 
\begin{align*}
\mathcal{M}(G) & =\coprod_{d}\mathcal{M}_{d}(G).
\end{align*}
When $G$ is a complex reductive Lie group the moduli space $\mathcal{M}_{d}(G)$
is connected and non-empty, for every $d\in\pi_{1}(G)$ (see \cite{GO16}).
But the situation is very different when $G$ is a real reductive
Lie group. In this case the moduli space $\mathcal{M}_{d}(G)$ can
be a union of several connected components and can also be empty for
some $d\in\pi_{1}(G)$.

The following are three known cases of real Lie groups for which there
exists a topological type $d$ such that $\mathcal{M}_{d}(G)$ is
disconnected: 
\begin{enumerate}
\item When $G$ is a split real form, proved by Hitchin \cite{Hi92}, 
\item When $G$ is non-compact of Hermitian type, the \emph{Cayley correspondence}
\cite{BGG06} provides extra components in the moduli space for maximal
Toledo invariant (defined below). For $G=\SL(2,\R)$, this goes back
to Goldman \cite{Gol80}. 
\item When $G=\SO_{0}(p,q)$ there are, in general, extra components not
accounted for by the preceding mechanisms, see \cite{Co17,ABCGGO}. 
\end{enumerate}
In the case when $G$ is non-compact of Hermitian type one can define
an integer invariant $\tau(E,\varphi)$ called the \emph{Toledo invariant}
which is an element of the torsion free part of $\pi_{1}(H)$. This
invariant is bounded by a \emph{Milnor-Wood inequality}, beyond which
the moduli spaces are empty. In fact, if $G$ is non-compact of Hermitian
type and $(E,\varphi^{+},\varphi^{-})$ is a semistable $G$-Higgs
bundle, then the Toledo invariant $\tau=\tau(E)$ satisfies 
\begin{equation}
-\rk(\im(\varphi^{+}))(2g-2)\leq\tau\leq\rk(\im(\varphi^{-}))(2g-2),\label{eq:Milnor-Wood}
\end{equation}
(see \cite{BGR,GN}) where $\varphi^{+},\varphi^{-}$ are defined
in \eqref{eq:phi-hermitian-type}.

\begin{example}
A $\SL(2,\mathbb{R})$-Higgs bundle has the form 
\[
E=(W=L\oplus L^{*},Q=\begin{pmatrix}0 & 1\\
1 & 0
\end{pmatrix},\varphi=\left(\begin{array}{cc}
0 & \varphi^{+}\\
\varphi^{-} & 0
\end{array}\right)),
\]
where $L$ is a line bundle, $\varphi^{+}\in H^{0}(X,L^{2}\otimes K)$
and $\varphi^{-}\in H^{0}(X,L^{-2}\otimes K)$. The group $\SL(2,\R)$
is of Hermitian type and the Toledo invariant is $\tau(E)=2\deg(L)$.
The inequality (\ref{eq:Milnor-Wood}) implies $\abs{\deg(L)}\leq g-1$
and, if both $\varphi^{+}$ and $\varphi^{-}$ are non-zero, any $E$
satisfying this inequality is semistable. Moreover, if $\varphi^{+}=0$,
then $E$ is semistable if and only if $\deg(L)\geq0$ and if $\varphi^{-}=0$,
then $E$ is semistable if and only if $\deg(L)\leq0$. Thus, if the
Higgs field vanishes then $E$ is semistable if and only if $\deg(L)=0$. \end{example}
\begin{rem}
\label{rem:toldeo-deformation} It follows from (\ref{eq:Milnor-Wood})
that, if $G$ is of Hermitian type and $(E,0)$ is a semistable $G$-Higgs
bundle, then $\tau(E)=0$. 
\end{rem}
For all the real connected semisimple classical groups of Hermitian
type, namely $\SU(p,q)$, $\Sp(2n,\mathbb{R})$, $\SO^{*}(2n)$ and
$\SO_{0}(2,n)$ we have $\pi_{1}(H)\cong\Z$, except $G=\SO_{0}(2,n)$
with $n\geq3$ for which $\pi_{1}(H)\cong\Z\oplus\Z_{2}$. So in these
cases, i.e.\ excepting $\SO_{0}(2,n)$, the topological type of the
$G$-Higgs bundle is determined by the Toledo invariant.

\subsection{The $\mathbb{C}^{\ast}$-action on the moduli spaces and retraction
to the nilpotent cone}

In this subsection we show how the use of a $\mathbb{C}^{*}$-action
on the moduli space of $G$-Higgs bundles implies a deformation retraction
onto the nilpotent cone.

The moduli space of $G$-Higgs bundles $\mathcal{M}_{d}(G)$ admits
a non-trivial holomorphic $\mathbb{C}^{*}$-action \cite{Hi87,Si92}
by multiplication of the Higgs field, 
\begin{equation}
z\cdot(E,\varphi)=(E,\,z\varphi).\label{C^*-action}
\end{equation}


From the gauge theory point of view one can observe that the action
of the subgroup $S^{1}\subset\mathbb{C}^{*}$ on the moduli space
is Hamiltonian with proper moment map defined as follows 
\begin{align*}
f & :\mathcal{M}_{d}(G)\to\mathbb{R}\\
 & \quad(E,\varphi)\mapsto\norm{\varphi}^{2}:=\int_{X}|\varphi|^{2}\mathrm{vol}.
\end{align*}
When the moduli space $\mathcal{M}_{d}(G)$ is smooth, the theorem
of Frankel \cite{Fr59} implies that $f$ is a perfect Bott-Morse
function. Another consequence of the fact that $f$ is a moment map
for the Hamiltonian $S^{1}$-action is that the set of critical points
of $f$ coincides with the set of fixed points of the action. We also
recall that the sets of fixed points of the actions of $S^{1}$ and
$\C^{*}$ coincide. Let $\{\mathcal{F}_{\lambda}\}_{\lambda\in\Lambda}$
be the set of the irreducible components of the fixed point set of
the $\mathbb{C}^{*}$-action on $\mathcal{M}_{d}(G)$, with $\Lambda$
an index set.

There exists a \emph{Morse stratification} on the moduli spaces $\mathcal{M}_{d}(G)$
which coincides with ther \emph{Bia{ł}ynicki-Birula stratification},
due to results of Kirwan in \cite{Ki84}. It is defined as follows.
Let 
\[
U_{\lambda}:=\{(E,\varphi)\in\mathcal{M}_{d}(G)\,|\,\lim_{z\to0}z\cdot(E,\varphi)\in\mathcal{F}_{\lambda}\}.
\]
Then $\cup_{\lambda}U_{\lambda}$ gives a stratification of $\mathcal{M}_{d}(G)$.

One can also define the so-called \emph{downward Morse flow} of $\mathcal{F}_{\lambda}$
which, again due to the result of Kirwan, is given by the sets $D_{\lambda}:=\{(E,\varphi)\in\mathcal{M}_{d}(G)\,|\underset{z\to\infty}{\lim}z\cdot(E,\varphi)\in\mathcal{F}_{\lambda}\}$.
Using the label $0\in\Lambda$ to denote the fixed point set of $G$-Higgs
bundles with zero Higgs field, it is clear that we have $\mathcal{F}_{0}=\mathcal{N}_{d}(H^{\mathbb{C}})$.
Note that $\mathcal{M}_{d}(G)$ does not have to be smooth for the
Bia{ł}ynicki-Birula stratification to be defined.

\subsubsection{Nilpotent Cone}

\label{sec:nilpotent-cone}

Take a basis $\{\beta_{1},\cdots,\beta_{r}\}$ for the $G$-invariant
polynomials on the Lie algebra $\mathfrak{g}^{\mathbb{C}}$ (under
the adjoint action) and let $d_{i}=\deg(\beta_{i})$. Given a $G$-Higgs
bundle $(E,\varphi)$, the evaluation of $\beta_{i}$ on $\varphi$
gives a section $\beta_{i}(\varphi)\in H^{0}(X,K^{d_{i}})$. For a
fixed $d\in\pi_{1}(G)$ the (restricted) \emph{Hitchin map} is defined
to be 
\begin{align*}
\mathcal{H} & :\mathcal{M}_{d}(G)\to\bigoplus H^{0}(X,K^{d_{i}})\\
 & \quad(E,\varphi)\mapsto(\beta_{1}(\varphi),\ldots,\beta_{r}(\varphi)).
\end{align*}
For example when $G=\GL(n,\mathbb{C})$ then $\beta_{i}(\varphi)$
can be taken to be $\mathrm{tr}(\wedge^{i}\varphi)$ and $d_{i}=i$
for all $i=1,\ldots,n$. The Hitchin map is proper for any choice
of basis; see \cite{Hi87,Hi92}. A more general direct construction
(i.e.\ without passing to the complex group) of the Hitchin map for
real $G$ can be found in \cite{GPR}.

The pre-image of zero under the Hitchin map $\mathcal{H}^{-1}(0)\subset\mathcal{M}_{d}(G)$
is called the \emph{nilpotent cone}. This was defined by Laumon \cite{La88}
in the case of a complex group, and by abuse of language we use the
same name when $G$ is a real Lie group. The Hitchin map is algebraic,
so the nilpotent cone is a subscheme which is, in general, neither
reduced nor irreducible (see \cite{Hi17} for a precise analysis in
the case $G=\SL(2,\C)$). However, we shall view it as a subvariety\footnote{We shall not require varieties to be irreducible.},
i.e., we consider the associated reduced scheme. 
\begin{prop}
\cite{Ha} \label{prop:Morse-Nil} The downward Morse flow coincides
with the nilpotent cone, more precisely 
\[
\mathcal{H}^{-1}(0)=\bigcup_{\lambda\in\Lambda}\bar{D}_{\lambda}.
\]

\end{prop}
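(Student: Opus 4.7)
The plan is to prove both inclusions separately, relying on two facts: the $\mathbb{C}^{\ast}$-equivariance of the Hitchin map, and the properness of $\mathcal{H}$. The key observation is that if $\beta_{i}$ is a homogeneous $G$-invariant polynomial of degree $d_{i}$, then the action \eqref{C^*-action} gives
\[
\mathcal{H}(z\cdot(E,\varphi))=(z^{d_{1}}\beta_{1}(\varphi),\ldots,z^{d_{r}}\beta_{r}(\varphi)).
\]
In particular $\mathcal{H}$ is $\mathbb{C}^{\ast}$-equivariant with \emph{positive} weights $d_{i}$, and the nilpotent cone $\mathcal{H}^{-1}(0)\subset\mathcal{M}_{d}(G)$ is a closed $\mathbb{C}^{\ast}$-invariant subvariety; since $\mathcal{H}$ is proper, $\mathcal{H}^{-1}(0)$ is moreover compact.

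For the inclusion $\bigcup_{\lambda}\bar{D}_{\lambda}\subseteq\mathcal{H}^{-1}(0)$: if $(E,\varphi)\in D_{\lambda}$, then by definition $\lim_{z\to\infty}z\cdot(E,\varphi)$ exists in $\mathcal{F}_{\lambda}$, so by continuity of $\mathcal{H}$ the limit of $\mathcal{H}(z\cdot(E,\varphi))=(z^{d_{i}}\beta_{i}(\varphi))_{i}$ exists in the (vector space) Hitchin base as $z\to\infty$. Since every $d_{i}>0$, this forces $\beta_{i}(\varphi)=0$ for all $i$, i.e.\ $(E,\varphi)\in\mathcal{H}^{-1}(0)$. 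Thus $D_{\lambda}\subseteq\mathcal{H}^{-1}(0)$, and taking closure preserves this inclusion since $\mathcal{H}^{-1}(0)$ is closed.

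For the reverse inclusion, take $(E,\varphi)\in\mathcal{H}^{-1}(0)$. By $\mathbb{C}^{\ast}$-equivariance the whole orbit $\mathbb{C}^{\ast}\cdot(E,\varphi)$ lies in $\mathcal{H}^{-1}(0)$, and this is a proper (in fact compact) variety. The morphism $\mathbb{C}^{\ast}\to\mathcal{H}^{-1}(0)$, $z\mapsto z\cdot(E,\varphi)$, therefore extends by the valuative criterion of properness to a morphism $\mathbb{P}^{1}\to\mathcal{H}^{-1}(0)$; in particular $\lim_{z\to\infty}z\cdot(E,\varphi)$ exists and, being $\mathbb{C}^{\ast}$-fixed, lies in some $\mathcal{F}_{\lambda}$. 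Hence $(E,\varphi)\in D_{\lambda}\subseteq\bar{D}_{\lambda}$, which gives $\mathcal{H}^{-1}(0)\subseteq\bigcup_{\lambda}\bar{D}_{\lambda}$.

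The only real subtlety is the existence of the $z\to\infty$ limit for orbits in the nilpotent cone: a priori $\mathcal{M}_{d}(G)$ is neither compact nor smooth, and one cannot directly invoke Morse-theoretic convergence. This is circumvented by using properness of $\mathcal{H}$ to trap the orbit inside the compact subvariety $\mathcal{H}^{-1}(0)$ before appealing to the valuative criterion. Alternatively, since the moment map $f=\norm{\varphi}^{2}$ is bounded on $\mathcal{H}^{-1}(0)$, one can invoke the standard gradient-flow convergence results of Kirwan on the compactified action, which is the approach taken in \cite{Ha}.
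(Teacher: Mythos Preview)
The paper does not supply its own proof of this proposition; it is stated with a citation to Hausel's thesis \cite{Ha} and then used as input for the subsequent results. So there is no in-paper argument to compare against.

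Your argument is correct and is essentially the standard one. The $\C^{*}$-equivariance of $\mathcal{H}$ with strictly positive weights immediately gives $D_{\lambda}\subseteq\mathcal{H}^{-1}(0)$, and properness of $\mathcal{H}$ makes $\mathcal{H}^{-1}(0)$ a proper variety, so the valuative criterion applied to the orbit map $\C^{*}\to\mathcal{H}^{-1}(0)$ produces the limit at $z\to\infty$ and hence the reverse inclusion. In fact your argument shows the slightly sharper statement $\mathcal{H}^{-1}(0)=\bigcup_{\lambda}D_{\lambda}$ (without closures), which of course implies the stated version since $\mathcal{H}^{-1}(0)$ is closed. Your closing remark correctly identifies the analytic alternative via the moment map and Kirwan's results, which is closer in spirit to the treatment in \cite{Ha}; either route is fine here.
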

From the above proposition and the fact that $\mathcal{H}$ is proper
we can also deduce that each component of the nilpotent cone is a
projective variety. The following result generalizes the one for semisimple
complex $G$ given in \cite{BF}, with an analogous proof. 
\begin{prop}
\label{prop:deformation-nil} Let $G$ be a real reductive Lie group.
Then the nilpotent cone $\mathcal{H}^{-1}(0)$ is a deformation retraction
of the moduli space $\mathcal{M}_{d}(G)$. \end{prop}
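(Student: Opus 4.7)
The plan is to use the holomorphic $\C^{*}$-action \eqref{C^*-action} to construct the retraction explicitly. I would define
\[
F\colon \mathcal{M}_d(G)\times [0,1] \longrightarrow \mathcal{M}_d(G), \qquad F\bigl((E,\varphi),t\bigr) = (E,\,t\varphi)
\]
for $t\in(0,1]$, and extend to $t=0$ by $F((E,\varphi),0)=\lim_{t\to 0^{+}}(E,\,t\varphi)$. Granting that this limit exists and that $F$ is continuous, the retraction properties come for free: $F(\cdot,1)=\mathrm{id}$; since $\mathcal{H}(E,t\varphi)=(t^{d_1}\beta_1(\varphi),\ldots,t^{d_r}\beta_r(\varphi))\to 0$ as $t\to 0$, continuity of $\mathcal{H}$ places $F((E,\varphi),0)$ in $\mathcal{H}^{-1}(0)$; and for any $(E,\varphi)\in\mathcal{H}^{-1}(0)$ the scaled pair $(E,t\varphi)$ again satisfies $\mathcal{H}(E,t\varphi)=0$, so the homotopy restricts to one on the nilpotent cone.

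The crucial analytic ingredient is properness of the Hitchin map. Because $\mathcal{H}(E,t\varphi)\to 0$ as $t\to 0$, the $\mathcal{H}$-image of the arc $\{t\cdot(E,\varphi):t\in(0,1]\}$ has compact closure in the Hitchin base, so by properness the arc itself has compact closure in $\mathcal{M}_d(G)$. Since the $\C^{*}$-action is algebraic and $\mathcal{M}_d(G)$ is separated, the valuative criterion applied to the proper morphism $\mathcal{H}$ gives a unique extension of $t\mapsto(E,t\varphi)$ from $\C^{*}$ to $\C$; equivalently, the compact $\C^{*}$-invariant orbit closure admits a unique Bia{\l}ynicki-Birula limit as $t\to 0^{+}$, which is a $\C^{*}$-fixed point necessarily lying in some $\mathcal{F}_\lambda\subset\mathcal{H}^{-1}(0)$.

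The main obstacle is checking continuity of $F$ at $t=0$. On $\mathcal{M}_d(G)\times(0,1]$ this is immediate from the algebraicity of the action. At the boundary, I would argue as follows: given a sequence $(E_n,\varphi_n)\to(E,\varphi)$ and $t_n\to 0^{+}$, the Hitchin images $\mathcal{H}(E_n,t_n\varphi_n)$ remain in a bounded set, so by properness the sequence $(E_n,t_n\varphi_n)$ has compact closure in $\mathcal{M}_d(G)$; any accumulation point is a $\C^{*}$-fixed point lying in the orbit closure associated to $(E,\varphi)$, and by uniqueness of the Bia{\l}ynicki-Birula limit it must coincide with $F((E,\varphi),0)$. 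The resulting argument parallels that of \cite{BF} in the semisimple complex case and carries over to real reductive $G$ verbatim, since the only input used is properness of the Hitchin map, guaranteed in this generality by the construction of \cite{GPR}.
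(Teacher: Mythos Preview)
Your approach has a genuine gap at the heart of the argument: the map $F$ is \emph{not} continuous at $t=0$. The Bia{\l}ynicki--Birula limit map $(E,\varphi)\mapsto\lim_{t\to 0}(E,t\varphi)$ is a morphism on each stratum $U_{\lambda}$ separately, but it jumps when one passes from one stratum to another. Concretely, take $(E,\varphi)\in U_{\lambda}$ with $\lambda\neq 0$ (so $E$ is an unstable $H^{\C}$-bundle and $\lim_{t\to 0}(E,t\varphi)\in\mathcal{F}_{\lambda}$) and approximate it by a sequence $(E_{n},\varphi_{n})\in U_{0}$ (so each $E_{n}$ is semistable and $\lim_{t\to 0}(E_{n},t\varphi_{n})=(E_{n},0)\in\mathcal{F}_{0}$). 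Since $\mathcal{F}_{0}=\mathcal{N}_{d}(H^{\C})$ is compact and disjoint from $\mathcal{F}_{\lambda}$, the sequence of limits $(E_{n},0)$ cannot converge to the limit of $(E,\varphi)$. In your continuity sketch, the assertion that any accumulation point of $(E_{n},t_{n}\varphi_{n})$ ``lies in the orbit closure associated to $(E,\varphi)$'' is exactly what fails and is nowhere justified: nothing prevents the accumulation point from lying in $\mathcal{F}_{0}$, far from the $\C^{*}$-orbit closure of $(E,\varphi)$.

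There is a second, independent problem. Even granting continuity, your $F$ would not be a deformation retraction onto $\mathcal{H}^{-1}(0)$: the image of $F(\cdot,0)$ is contained in the $\C^{*}$-fixed locus $\bigcup_{\lambda}\mathcal{F}_{\lambda}$, whereas the nilpotent cone $\mathcal{H}^{-1}(0)=\bigcup_{\lambda}\overline{D}_{\lambda}$ (Proposition~\ref{prop:Morse-Nil}) contains many non-fixed points, so $F(x,0)\neq x$ for generic $x\in\mathcal{H}^{-1}(0)$. Observing that the homotopy preserves $\mathcal{H}^{-1}(0)$ is not enough.

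The paper (following \cite{BF}) sidesteps both difficulties with a two-step argument. First, using the $\C^{*}$-action with \emph{bounded} rescaling---stopping at a positive $t_{0}>0$ chosen so that the Hitchin image has fixed small norm---it retracts $\mathcal{M}_{d}(G)$ onto a compact neighbourhood $U_{\epsilon}$ of $\mathcal{H}^{-1}(0)$; this uses only continuity of the action on $\C^{*}\times\mathcal{M}_{d}(G)$, never the $t\to 0$ limit, and fixes $U_{\epsilon}$ pointwise. Second, since $\mathcal{H}^{-1}(0)$ is a projective variety and hence a finite CW-complex, it is a neighbourhood deformation retract, so for $\epsilon$ small enough $U_{\epsilon}$ retracts onto $\mathcal{H}^{-1}(0)$. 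Composing the two retractions gives the result.
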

\begin{proof}
Fixing a Hermitian metric on $X$ it induces a Hermitian metric on
$K$, and hence a inner product on each vector space $H^{0}(X,K^{d_{i}})$.
Consider the following composition map: 
\begin{align*}
\mathcal{M}_{d}(G)\stackrel{\mathcal{H}}{\to}\bigoplus_{i=1}^{r}H^{0}(X,K^{d_{i}}) & \stackrel{f}{\to}\mathbb{R}_{\geq0},\\
(s_{1},\cdots,s_{r}) & \mapsto\sum_{i=1}^{r}\norm{s_{i}}^{\frac{1}{d_{i}}}.
\end{align*}
Since both the Hitchin map $\mathcal{H}$ and $f$ are proper, the
inverse image $(f\circ\mathcal{H})^{-1}([0,\epsilon])=:U_{\epsilon}$
is a compact neighborhood of the nilpotent cone. Note that for any
real $t\geq0$ and $s_{i}\in H^{0}(X,K^{d_{i}})$ we have $\norm{t.s_{i}}=t^{d_{i}}\norm{s_{i}}$
and hence 
\begin{align}
f(ts_{1},\cdots,ts_{r}) & =t\,f(s_{1},\cdots,s_{r})\label{property-f}
\end{align}

Using the $\mathbb{C}^{*}$-action on the moduli space of $G$-Higgs
bundles $\eqref{C^*-action}$ we define the following homotopy between
the identity map of $\mathcal{M}_{d}(G)$ and a retraction onto $U_{\epsilon}$
as follows: 
\begin{align*}
\mathcal{F}: & \mbox{ }\mathcal{M}_{d}(G)\times[0,1]\to\mathcal{M}_{d}(G)\\
 & \quad(E,\varphi)\mapsto\begin{cases}
\begin{cases}
(E,t_{0}\cdot\varphi) & t\leq t_{0}:=\frac{\epsilon}{f(\mathcal{H}(E,\varphi))}\\
(E,t\cdot\varphi) & t>t_{0}
\end{cases} & \mbox{ if }f(\mathcal{H}(E,\varphi))>\epsilon\\
(E,\varphi) & \mbox{ if }f(\mathcal{H}(E,\varphi))\leq\epsilon
\end{cases}
\end{align*}
Indeed, we have 
\begin{align*}
\mathcal{F}((E,\varphi),t) & =(E,\varphi),\text{ for }(E,\varphi)\in U_{\epsilon}\\
\mathcal{F}((E,\varphi),1) & =(E,\varphi),\text{ for }(E,\varphi)\in\mathcal{M}_{d}(G).
\end{align*}
Next we prove $\mathcal{F}((E,\varphi),0)\in U_{\epsilon}$ to conclude
that $U_{\epsilon}$ is a deformation retraction of $\mathcal{M}_{d}(G)$.
Clearly if $f(\mathcal{H}(E,\varphi))\leq\epsilon$ then $\mathcal{F}((E,\varphi),0)=(E,\varphi)\in U_{\epsilon}$.
If $f(\mathcal{H}(E,\varphi))>\epsilon$ then 
\begin{align*}
f(\mathcal{H}(\mathcal{F}((E,\varphi),0)))=f(\mathcal{H}(E,t_{0}\cdot\varphi))=t_{0}\,f(\mathcal{H}(E,\varphi))=\epsilon,
\end{align*}
in the last equality we use the equality \eqref{property-f}.

The nilpotent cone is a proper subvariety of $\mathcal{M}_{d}(G)$
so it is a finite CW-complex and an absolute deformation retract (see,
for example, \cite{BCR98}). Hence, there is some open neighborhood
$U\supseteq\mathcal{H}^{-1}(0)$ such that $U$ deformation retracts
to $\mathcal{H}^{-1}(0)$. Choose $\epsilon$ small enough so that
$U_{\epsilon}\subset U$, this is possible as $\mathcal{H}$ is proper.
Therefore the composition of deformation retraction of $U$ into the
nilpotent cone and of $\mathcal{M}_{d}(G)$ into $U_{\epsilon}$ gives
a retraction of $\mathcal{M}_{d}(G)$ into the nilpotent cone. 
\end{proof}

\section{The obstructions to a deformation retraction}

\label{sec:obstruction} For every topological type $d\in\pi_{1}(H)$
there is a natural inclusion $\mathcal{N}_{d}(H^{\mathbb{C}})\subset\mathcal{M}_{d}(G)$
which comes from considering principal $H^{\mathbb{C}}$-bundles as
$G$-Higgs bundles with zero Higgs field. Thus, we have 
\[
\mathcal{N}_{d}(H^{\mathbb{C}})\subset\mathcal{H}^{-1}(0)\subset\mathcal{M}_{d}(G),
\]
and we can identify $\mathcal{N}_{d}(H^{\mathbb{C}})=\mathcal{F}_{0}$.
Thus, in order to discuss obstructions to the deformation retraction
from the moduli spaces of $G$-Higgs bundles to $\mathcal{N}_{d}(H^{\mathbb{C}})$,
by using Proposition \ref{prop:deformation-nil}, it is enough to
study the obstructions to deformation retraction from the nilpotent
cone to $\mathcal{N}_{d}(H^{\mathbb{C}})=\mathcal{F}_{0}$, which
we do next.
\begin{rem*}
In the case when $G$ is non-compact of Hermitian type, by Remark~\ref{rem:toldeo-deformation}
the right question to ask would be the deformation retraction from
$\mathcal{M}_{d}(G)$ to $\mathcal{N}_{d}(H^{\mathbb{C}})$ for trivial
topological type $d=0$. 
\end{rem*}

\subsection{Additive homology of $\mathcal{M}_{d}(G)$}

In this section we consider homology with $\C$-coefficients. The
following lemmas are of course well known but, for completeness, we
include proofs.

Recall that we do not require algebraic varieties to be irreducible.
We understand the dimension of a variety $Y$ to be the maximal dimension
of an irreducible component of $Y$. We also recall that any projective
variety has the structure of a finite CW-complex and that this can
taken to be compatible with any given subvariety \cite{BCR98,Hir75}.
Finally we recall that any irreducible projective variety $Y$ of
dimension $r$ has a non-zero fundamental class $[Y]\in H_{2r}(Y)\cong\C$
(see, e.g., \cite[II.7.6]{Ha75}) and that $H_{n}(Y)=0$ for $n>2\dim(Y)$. 
\begin{lem}
\label{lem:h-different-1} Let $Y$ be a projective variety of dimension
$r$. Then $H_{2r}(Y)\cong\C^{n}$, where $n$ is the number of irreducible
components of $Y$ of dimension $r$. \end{lem}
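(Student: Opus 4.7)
The plan is to induct on the total number of irreducible components of $Y$, peeling off one component at a time and using a Mayer--Vietoris argument together with the two ingredients already recalled in the paper: that an irreducible projective variety of dimension $r$ has $H_{2r} \cong \C$, and that $H_m(Z) = 0$ for $m > 2\dim(Z)$. To make Mayer--Vietoris legitimate, I would invoke at the outset a triangulation of $Y$ compatible with every irreducible component (this is where the citations to \cite{BCR98,Hir75} do the work), so that every union of components is a CW subcomplex.

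For the base case $Y$ is irreducible and the statement reduces to the cited fact $H_{2r}(Y) \cong \C$. For the inductive step, write the irreducible decomposition as $Y = Y_1 \cup \cdots \cup Y_k$, and split off one component as follows. If some component $Y_j$ has dimension strictly less than $r$, set $A = Y_j$ and $W = \bigcup_{i \ne j} Y_i$; otherwise all components are top-dimensional ($k = n$) and I set $A = Y_n$, $W = Y_1 \cup \cdots \cup Y_{n-1}$. In either case, $A \cap W$ is contained in a proper closed subvariety of the irreducible $A$ (since $A$ is not a component of $W$), hence $\dim(A \cap W) < \dim A \leq r$. In the first case we also have $\dim A < r$, while in the second case both $A$ and the components of $W$ have dimension $r$ with one fewer top-dimensional piece in $W$.

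The Mayer--Vietoris sequence contributes
\[
H_{2r}(A\cap W) \longrightarrow H_{2r}(A) \oplus H_{2r}(W) \longrightarrow H_{2r}(Y) \longrightarrow H_{2r-1}(A\cap W).
\]
Since $\dim(A\cap W) \leq r-1$ we get $2\dim(A\cap W) \leq 2r-2$, so both outer groups vanish. In the first case we additionally have $H_{2r}(A) = 0$, so $H_{2r}(Y) \cong H_{2r}(W) \cong \C^n$ by induction on $k$; in the second case $H_{2r}(A) \cong \C$ and $H_{2r}(W) \cong \C^{n-1}$ by induction, yielding $H_{2r}(Y) \cong \C^n$.

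The main point that needs care is the inductive step when all components are top-dimensional: one must be sure that the pairwise intersections $Y_n \cap Y_i$ really drop in dimension. This is immediate from irreducibility of $Y_n$ together with $Y_i \neq Y_n$, since $Y_n \cap Y_i$ is then a proper closed subvariety of the irreducible $Y_n$. The other potentially delicate point is the validity of Mayer--Vietoris for the closed cover $\{A,W\}$, which is why the compatible CW structure has to be fixed at the start; once that is in place the argument is routine.
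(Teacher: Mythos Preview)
Your proof is correct and follows essentially the same approach as the paper: induction on the number of irreducible components, peeling off one component at a time and applying Mayer--Vietoris with the vanishing of homology above twice the dimension. The paper's version is terser and does not separate the two cases (lower-dimensional versus top-dimensional component peeled off), nor does it explicitly invoke the compatible triangulation to justify Mayer--Vietoris for the closed cover; your write-up is more careful on both points, but the argument is the same.
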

\begin{proof}
We prove the result by induction on the number of irreducible components
of $Y$, the case $n=1$ being the result described in the paragraph
preceding the lemma. So let $Y=Y_{1}\cup Y_{2}$, where $Y_{1}$ is
irreducible and $Y_{2}$ has $n-1$ irreducible components.

Let $r=\dim(Y)$. Since $\dim(Y_{1}\cap Y_{2})<r$ we have $H_{n}(Y_{1}\cap Y_{2})=0$
for $n>2r-2$. Thus the Mayer--Vietoris sequence for $Y=Y_{1}\cup Y_{2}$
gives 
\[
0\to H_{2r}(Y_{1})\oplus H_{2r}(Y_{2})\xrightarrow{\cong}H_{2r}(Y)\to0.
\]
Since by induction the desired result holds for $Y_{1}$ and $Y_{2}$,
the lemma follows. \end{proof}
\begin{lem}
\label{lem:h-different-2} Let $Y$ be a projective variety and suppose
that $Y=Y_{1}\cup Y_{2}$, where $Y_{i}\subsetneq Y$ is non-empty
and closed for $i=1,2$. Then $Y_{i}$ and $Y$ have non-isomorphic
homology %
{} for $i=1,2$. \end{lem}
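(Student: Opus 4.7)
The plan is to distinguish $H_*(Y)$ from $H_*(Y_1)$ in a single degree by means of Lemma~\ref{lem:h-different-1} and a Mayer--Vietoris argument; the case of $Y_2$ is symmetric, so I focus on $Y_1$. The key idea is to replace $Y_2$ by a cleaner auxiliary closed subvariety $W\subseteq Y_2$ built out of full irreducible components of $Y$, so that its intersection with $Y_1$ is forced to drop in dimension.

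Concretely, I would set $W:=\overline{Y\setminus Y_1}$. Any irreducible component $Z$ of $Y$ satisfies $Z=(Z\cap Y_1)\cup(Z\cap Y_2)$, and irreducibility forces $Z\subseteq Y_1$ or $Z\subseteq Y_2$; it then follows that $W$ is exactly the union of those irreducible components of $Y$ not contained in $Y_1$, so $W\subseteq Y_2$, and $W$ is non-empty because $Y_1\subsetneq Y$. Writing $s:=\dim W$ and decomposing $W=W_1\cup\cdots\cup W_k$ into irreducible components, each $W_j$ is irreducible and not contained in $Y_1$, so $W_j\cap Y_1$ is a proper closed subset of $W_j$ and therefore has dimension strictly less than $\dim W_j\leq s$. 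Taking the maximum over $j$ yields the key dimension bound $\dim(W\cap Y_1)\leq s-1$.

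To finish, I would apply Mayer--Vietoris to $Y=Y_1\cup W$, which is legitimate once one chooses a finite CW-structure on $Y$ in which $Y_1$ and $W$ are subcomplexes (as recalled just before Lemma~\ref{lem:h-different-1}). The dimension bound forces $H_k(W\cap Y_1)=0$ for $k\geq 2s-1$, so in degree $2s$ the sequence collapses to
\[
0\to H_{2s}(Y_1)\oplus H_{2s}(W)\to H_{2s}(Y)\to 0.
\]
Lemma~\ref{lem:h-different-1} gives $\dim_{\C}H_{2s}(W)\geq 1$, whence $\dim_{\C}H_{2s}(Y)>\dim_{\C}H_{2s}(Y_1)$ and the graded homologies cannot be isomorphic. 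The main step to get right is the dimension bound $\dim(W\cap Y_1)<\dim W$: once one observes that every irreducible component of $Y$ lies in $Y_1$ or in $Y_2$, and engineers $W$ so that it contains no irreducible component of $Y$ sitting inside $Y_1$, that bound is immediate from the irreducibility of the $W_j$; the remaining ingredients are standard applications of Lemma~\ref{lem:h-different-1} and of Mayer--Vietoris on the CW-pair $(Y_1,W)$.
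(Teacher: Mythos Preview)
Your argument is correct and follows the paper's approach: engineer the two pieces so that their intersection has strictly lower dimension, then read off the conclusion from Mayer--Vietoris together with Lemma~\ref{lem:h-different-1}. Your construction of $W=\overline{Y\setminus Y_1}$ packages this more uniformly than the paper, which instead splits into the cases $\dim Y_1=\dim Y_2=\dim Y$ and $\dim Y_1<\dim Y_2=\dim Y$, and in the second case strips from $Y_1$ its components lying in $Y_2$ --- the mirror image of your move.
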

\begin{proof}
Let $r=\dim(Y)$. If both $Y_{1}$ and $Y_{2}$ have dimension $r$,
the result is immediate from Lemma~\ref{lem:h-different-1}. It remains
to consider the case when $\dim(Y_{1})=s<r$ and $\dim(Y_{2})=r$,
say. Since clearly $Y$ and $Y_{1}$ have distinct homology we just
have to show that $Y$ and $Y_{2}$ have distinct homology. For this,
note first that we may remove any irreducible components of $Y_{1}$
which are contained in $Y_{2}$ and still have the hypotheses of the
Lemma satisfied. Then, by decomposing into irreducible components,
we see that $\dim(Y_{1}\cap Y_{2})<s$. Therefore we have $H_{n}(Y_{1}\cap Y_{2})=0$
for $n>2s-2$. Thus the Mayer--Vietoris sequence for $Y=Y_{1}\cup Y_{2}$
gives 
\[
0\to H_{2s}(Y_{1})\oplus H_{2s}(Y_{2})\xrightarrow{\cong}H_{2s}(Y)\to0.
\]
Since, by Lemma~\ref{lem:h-different-1}, $H_{2s}(Y_{1})\neq0$,
we see that $H_{2s}(Y_{2})$ and $H_{2s}(Y)$ are distinct, as desired. \end{proof}
\begin{lem}
\label{lem:wobbly-limit} Assume that there exists a component $\mathcal{F}_{\lambda}$
of the fixed locus with $\lambda\neq0$. Then we may choose $\lambda\neq0$
such that 
\[
\bar{D}_{\lambda}\cap\mathcal{F}_{0}=\left\{ \lim_{z\to0}(E,z\varphi)\suchthat(E,\varphi)\in D_{\lambda}\right\} .
\]
\end{lem}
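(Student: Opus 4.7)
The plan is to pick $\lambda\neq 0$ minimally in a natural partial order on the fixed-point index set $\Lambda$ so that $\overline{D}_{\lambda}$ meets no fixed component $\mathcal{F}_{\mu}$ with $\mu\neq 0,\lambda$; then both inclusions follow from $\mathbb{C}^{\ast}$-orbit-closure analysis on the projective variety $\overline{D}_{\lambda}\subseteq\mathcal{H}^{-1}(0)$. Only non-fixed points $(E,\varphi)\in D_{\lambda}\setminus\mathcal{F}_{\lambda}$ are relevant for the equality, since fixed $(E,\varphi)\in\mathcal{F}_{\lambda}$ map to themselves under $\lim_{z\to 0}$ and contribute only to $\mathcal{F}_{\lambda}$, which is disjoint from $\mathcal{F}_{0}$.

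First I would order $\Lambda$ by $\mu\preceq\lambda$ iff $\mathcal{F}_{\mu}\subseteq\overline{D}_{\lambda}$. The nilpotent cone being projective has finitely many irreducible components, so $\Lambda$ is finite; the hypothesis provides $\Lambda\setminus\{0\}\neq\emptyset$, so I can pick a minimal $\lambda\in\Lambda\setminus\{0\}$. For the inclusion $\supseteq$, take $(E,\varphi)\in D_{\lambda}\setminus\mathcal{F}_{\lambda}$: the orbit closure $\overline{\mathbb{C}^{\ast}\cdot(E,\varphi)}$ is a rational curve in $\overline{D}_{\lambda}$ with two distinct $\mathbb{C}^{\ast}$-fixed endpoints, namely $\lim_{z\to\infty}z\cdot(E,\varphi)\in\mathcal{F}_{\lambda}$ by definition of $D_{\lambda}$, and $\lim_{z\to 0}z\cdot(E,\varphi)\in\mathcal{F}_{\mu}\subseteq\overline{D}_{\lambda}$ for some $\mu\neq\lambda$, forcing $\mu=0$ by minimality.

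For the inclusion $\subseteq$, given $p\in\overline{D}_{\lambda}\cap\mathcal{F}_{0}$, the minimality forces $\overline{D}_{\lambda}\setminus D_{\lambda}\subseteq\mathcal{F}_{0}$, so $p\in\overline{D_{\lambda}\setminus\mathcal{F}_{\lambda}}$ (the unique non-fixed stratum of $\overline{D}_{\lambda}$). Applying Sumihiro's $\mathbb{C}^{\ast}$-equivariant local linearization to $\overline{D}_{\lambda}$ near the fixed point $p$, the tangent cone decomposes into $\mathbb{C}^{\ast}$-weight spaces; a positive-weight direction must exist, since otherwise every nearby non-fixed point of $\overline{D}_{\lambda}$ would flow to $p$ as $z\to\infty$, placing it in $D_{0}=\mathcal{F}_{0}$ and therefore fixed, contradicting $p\in\overline{D_{\lambda}\setminus\mathcal{F}_{\lambda}}$. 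Any such positive-weight direction yields an orbit of some $(E,\varphi)\in D_{\lambda}$ whose $z\to 0$ limit is exactly $p$. The main obstacle is this $\subseteq$ direction, i.e.\ producing for each boundary point $p$ a single orbit hitting it in the $z\to 0$ limit rather than merely a sequential limit of such limits; this is precisely where the minimal choice of $\lambda$ is indispensable, as it eliminates intermediate fixed components $\mathcal{F}_{\mu}$ that would otherwise absorb the $z\to 0$ limits of some orbits away from $\mathcal{F}_{0}$.
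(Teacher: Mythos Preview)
Your overall strategy --- pick a ``minimal'' $\lambda\neq 0$ and analyse $\mathbb{C}^{\ast}$-orbit closures --- is the same idea as the paper's, but the specific partial order you introduce is too weak, and this creates a genuine gap in both inclusions. You set $\mu\preceq\lambda$ iff $\mathcal{F}_{\mu}\subseteq\overline{D}_{\lambda}$. In the $\supseteq$ step, however, a single orbit only produces one point $\lim_{z\to 0}(E,z\varphi)\in\mathcal{F}_{\mu}\cap\overline{D}_{\lambda}$; this does \emph{not} yield the containment $\mathcal{F}_{\mu}\subseteq\overline{D}_{\lambda}$, so minimality in your sense does not force $\mu=0$. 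The same problem bites in the $\subseteq$ step: a non-fixed $q\in\overline{D}_{\lambda}\setminus D_{\lambda}$ gives $\lim_{z\to\infty}q\in\mathcal{F}_{\mu}\cap\overline{D}_{\lambda}$ for some $\mu\neq\lambda$, again only a single point, so you cannot conclude $\overline{D}_{\lambda}\setminus D_{\lambda}\subseteq\mathcal{F}_{0}$ from your minimality. One can try to repair this by using instead the preorder $\mu\preceq\lambda\iff\mathcal{F}_{\mu}\cap\overline{D}_{\lambda}\neq\emptyset$, but then ruling out cycles among components at the same critical value becomes delicate without further input. There is also a technical issue: Sumihiro's equivariant local linearization requires normality, which is not known for $\overline{D}_{\lambda}$.

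The paper avoids all of this by linearizing globally rather than locally. Following Simpson, one embeds the nilpotent cone $\mathbb{C}^{\ast}$-equivariantly in a projective space carrying a linear action with non-negative weights, so that $\mathcal{F}_{0}$ lies in the weight-zero subspace, and then simply takes $\mathcal{F}_{\lambda}$ to be a fixed component of smallest non-zero weight. In that linear picture the weight grading supplies the needed height function directly (so minimality is automatic and there are no cycles to worry about), and the orbit analysis reduces to reading off lowest and highest weight components of vectors --- no appeal to Sumihiro and no normality hypothesis are required.
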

\begin{proof}
Following Simpson \cite[S{11}]{Si94}, we may consider a $\C^{*}$-equivariant
embedding of $\mathcal{H}^{-1}(0)$ as a projective variety, where
the ambient projective space has a standard positively weighted $\C^{*}$-action
and $\mathcal{F}_{0}$ lies in the weight zero subspace. Then the
component $\mathcal{F}_{\lambda}$ with the lowest non-zero weight
of the $\C^{*}$-action satisfies the condition of the lemma. \end{proof}
\begin{prop}
\label{Prop:Deformation-Retraction} Suppose that there is a non-empty
$\mathcal{F}_{\lambda}$, for some $\lambda\neq0$. Then $\mathcal{M}_{d}(G)$
and $\mathcal{N}_{d}(H^{\mathbb{C}})$ have distinct additive singular
homology. \end{prop}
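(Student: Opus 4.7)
Our plan is to reduce to an application of Lemma~\ref{lem:h-different-2}. By Proposition~\ref{prop:deformation-nil}, $\mathcal{M}_d(G)$ is homotopy equivalent to the nilpotent cone $Y := \mathcal{H}^{-1}(0)$, so it suffices to show that $Y$ and $\mathcal{N}_d(H^{\mathbb{C}})$ have distinct singular homology. By Proposition~\ref{prop:Morse-Nil} we have $Y = \bigcup_\lambda \bar{D}_\lambda$, and since only the zero-Higgs-field points admit a limit as $z\to\infty$ under the $\mathbb{C}^{\ast}$-action, $\bar{D}_0 = \mathcal{F}_0 = \mathcal{N}_d(H^{\mathbb{C}})$. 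Setting
\[
Y_1 := \mathcal{N}_d(H^{\mathbb{C}}), \qquad Y_2 := \bigcup_{\mu \neq 0} \bar{D}_\mu,
\]
we obtain $Y = Y_1 \cup Y_2$ with $Y_2$ closed as a finite union of closed subvarieties of $Y$.

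To invoke Lemma~\ref{lem:h-different-2} and conclude $H_*(Y_1) \neq H_*(Y)$, we must verify that both $Y_1$ and $Y_2$ are non-empty proper closed subsets of $Y$. The condition $Y_1 \subsetneq Y$ is immediate from the hypothesis: choose $\lambda \neq 0$ with $\mathcal{F}_\lambda \neq \emptyset$; since distinct fixed-point components are pairwise disjoint, $\mathcal{F}_\lambda \subset \bar{D}_\lambda \subset Y_2$ while $\mathcal{F}_\lambda \cap Y_1 = \emptyset$. The reverse condition $Y_2 \subsetneq Y$ is where Lemma~\ref{lem:wobbly-limit} enters: for the $\lambda$ of lowest positive weight selected there, $\bar{D}_\lambda \cap \mathcal{F}_0$ is the image of the zero-limit map
\[
D_\lambda \to \mathcal{F}_0, \qquad (E,\varphi) \mapsto \lim_{z\to 0}(E, z\varphi),
\]
and the analogous description of $\bar{D}_\mu \cap \mathcal{F}_0$ for each other non-zero weight $\mu$ follows from the same Bia{\l}ynicki--Birula analysis.

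The genuine technical hurdle is then to show that, for each such $\mu \ne 0$, the image of the zero-limit map $D_\mu \to \mathcal{F}_0$ is a \emph{proper} closed subvariety of $\mathcal{F}_0$: intuitively, a generic polystable $H^{\mathbb{C}}$-bundle in $\mathcal{N}_d(H^{\mathbb{C}})$ does not arise as the $z\to 0$ limit of a non-trivially Higgsed $G$-Higgs bundle, i.e.\ lies outside the ``wobbly'' locus of bundles admitting a non-zero nilpotent Higgs field. A dimension count on $D_\mu$ using the strict positivity of the $\mathbb{C}^{\ast}$-weights in the equivariant projective embedding of $Y$ should supply this bound; once in hand, the finite union $Y_2 \cap \mathcal{F}_0$ remains a proper subvariety of $\mathcal{F}_0$, yielding $Y_2 \subsetneq Y$. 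Lemma~\ref{lem:h-different-2} then gives $H_*(Y_1) \ne H_*(Y)$, completing the proof.
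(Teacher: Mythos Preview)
Your decomposition $Y = Y_1 \cup Y_2$ with $Y_1 = \mathcal{F}_0$ and $Y_2 = \bigcup_{\mu\neq 0}\bar D_\mu$ is natural, and the verification that $Y_1 \subsetneq Y$ is fine. The gap is in the step $Y_2 \subsetneq Y$. What you are really asserting there is that a generic point of $\mathcal{F}_0 = \mathcal{N}_d(H^{\mathbb C})$ is \emph{not} the $z\to 0$ limit of any point in some $D_\mu$ with $\mu\neq 0$; equivalently, that a generic semistable $H^{\mathbb C}$-bundle admits no nonzero nilpotent Higgs field. That is precisely the existence of very stable bundles. For complex $G$ this is a theorem of Biswas--Ramanan (and indeed the paper uses it later, in Proposition~\ref{rem:nilpotent-element}), but for real $G$ no such result is available --- the paper says so explicitly in the remark following Theorem~\ref{thm:reducibility-nilpotent}. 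Your ``dimension count'' does not supply it either: knowing that the $\mathbb C^*$-orbits in $D_\mu$ are positive-dimensional tells you that the limit map $D_\mu \to \mathcal{F}_0$ drops dimension, but it says nothing about the image being proper in $\mathcal{F}_0$, since $D_\mu$ may well have strictly larger dimension than $\mathcal{F}_0$. Also, the extension of Lemma~\ref{lem:wobbly-limit} to all $\mu\neq 0$ that you invoke is not immediate: for higher weights, $\bar D_\mu \cap \mathcal{F}_0$ can pick up contributions from $\bar D_\mu \setminus D_\mu$.

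The paper sidesteps this entirely by \emph{not} trying to prove $Y_2 \subsetneq Y$. Instead it fixes the single $\lambda$ of Lemma~\ref{lem:wobbly-limit} and splits into two cases. If $\mathcal{F}_0 \not\subset \bar D_\lambda$, then Lemma~\ref{lem:h-different-2} applies. If $\mathcal{F}_0 \subset \bar D_\lambda$, then by Lemma~\ref{lem:wobbly-limit} the map $(E,\varphi)\mapsto \lim_{z\to 0}(E,z\varphi)$ from $\bar D_\lambda^0 := \{(E,\varphi)\in\bar D_\lambda : \lim_{z\to 0}(E,z\varphi)\in\mathcal{F}_0\}$ to $\mathcal{F}_0$ is a surjective $\mathbb C^*$-equivariant morphism, with the action nontrivial on the source and trivial on the target. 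Hence $\dim \bar D_\lambda > \dim \mathcal{F}_0$, so $\dim Y > \dim \mathcal{F}_0$, and Lemma~\ref{lem:h-different-1} (not Lemma~\ref{lem:h-different-2}) gives distinct top-degree homology directly. The dimension argument you reached for is exactly the right tool, but it should be used to compare $\bar D_\lambda$ with $\mathcal{F}_0$, not to bound the image of the limit map inside $\mathcal{F}_0$.
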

\begin{proof}
Consider the closed subspace $\bar{D}_{\lambda}\subset\mathcal{H}^{-1}(0)$.
If $\mathcal{F}_{0}$ is not contained in $\bar{D}_{\lambda}$ then
Lemma~\ref{lem:h-different-2} gives the conclusion. Otherwise Lemma~\ref{lem:wobbly-limit}
tells us that, for suitable $\lambda$, any $(E,0)\in\mathcal{F}_{0}$
is of the form $(E,0)=\lim_{z\to0}(E,z\varphi)$ with $(E,\varphi)\in D_{\lambda}$.
Now consider the $\C^{*}$-invariant subspace of $\bar{D}_{\lambda}$,
\[
\bar{D}_{\lambda}^{0}=\{(E,\varphi)\in\bar{D}_{\lambda}\suchthat\lim_{z\to0}(E,z\varphi)\in\mathcal{F}_{0}\}.
\]
By Lemma~\ref{lem:wobbly-limit}, the map $\bar{D}_{\lambda}^{0}\to\mathcal{F}_{0}$
given by $(E,\varphi)\mapsto(E,0)$ is a surjective morphism. Moreover,
it is clearly $\C^{*}$-equivariant. Hence, since the $\C^{*}$-action
is non-trivial on $\bar{D}_{\lambda}^{0}$ and trivial on $\mathcal{F}_{0}$,
we conclude that $\dim\bar{D}_{\lambda}>\dim\mathcal{F}_{0}$. Therefore
Lemma~\ref{lem:h-different-1} shows that $\mathcal{H}^{-1}(0)$
and $\mathcal{F}_{0}$ have distinct homology, as was to be shown. \end{proof}
\begin{cor}
\label{cor:retraction} Suppose that there exists a semistable $G$-Higgs
bundle $(E,\varphi)$ for which $E$ is unstable as a principal $H^{\mathbb{C}}$-bundle.
Then $\mathcal{M}_{d}(G)$ and $\mathcal{N}_{d}(H^{\mathbb{C}})$
have distinct additive singular homology. \end{cor}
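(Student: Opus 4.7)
The plan is to produce a non-empty fixed component $\mathcal{F}_{\lambda}$ with $\lambda\neq0$ and then apply Proposition~\ref{Prop:Deformation-Retraction}. Given $(E,\varphi)$ semistable with $E$ unstable, I first note $\varphi\neq0$: otherwise $(E,0)$ would be semistable, which is equivalent to $E$ being semistable as an $H^{\C}$-bundle, contradicting the hypothesis. The natural candidate for a non-trivial fixed point is the $\C^{*}$-limit of $[(E,\varphi)]$: by properness of the Hitchin map, $\mathcal{H}([(E,z\varphi)])=(z^{d_{1}}\beta_{1}(\varphi),\ldots,z^{d_{r}}\beta_{r}(\varphi))\to0$ as $z\to0$, so the orbit $\{[(E,z\varphi)]\}_{z\in\C^{*}}$ remains in a compact subset of $\mathcal{M}_{d}(G)$, and the limit $[(E_{0},\varphi_{0})]:=\lim_{z\to0}[(E,z\varphi)]$ exists in $\mathcal{M}_{d}(G)$ and is a $\C^{*}$-fixed point, lying in some $\mathcal{F}_{\mu}$.

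The main claim is then that $\mu\neq0$, i.e.\ $\varphi_{0}\neq0$. I would prove this by contradiction. Assume $\varphi_{0}=0$, so $[(E_{0},0)]\in\mathcal{F}_{0}$ with $E_{0}$ polystable as an $H^{\C}$-bundle. The standard GIT description of the $\C^{*}$-limit on $\mathcal{M}_{d}(G)$, as used in the complex-group case in \cite{BF}, realizes $E_{0}$ as an associated graded $\mathrm{gr}_{\lambda}(E)=\bigoplus_{n}E_{n}$ for a one-parameter subgroup $\lambda$ of the complexified gauge group of $E$, together with the induced $\Z$-filtration $F_{n}=\bigoplus_{k\leq n}E_{k}$ and weight decomposition $\varphi=\sum_{m}\varphi_{m}$ in $\End(E)\otimes K$. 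Existence of the limit as $z\to0$ forces $\varphi_{m}=0$ for $m<-1$, and vanishing of the limit Higgs field further forces $\varphi_{-1}=0$; so $\varphi$ has only non-negative weights under $\lambda$, and each $F_{n}$ is $\varphi$-invariant.

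To derive the contradiction, I would use that polystability of $E_{0}\cong\bigoplus_{n}E_{n}$ forces each $E_{n}$ to be polystable of the same slope $\mu(E)$. The filtration $F_{\bullet}$ then exhibits $E$ as an iterated extension of polystable $H^{\C}$-bundles all of slope $\mu(E)$, and a standard argument (taking intersections $F\cap F_{n}$ for any subbundle $F\subset E$ and using semistability of the graded pieces) shows that $E$ is semistable as an $H^{\C}$-bundle, contradicting the hypothesis. Hence $\varphi_{0}\neq0$, which produces a non-empty $\mathcal{F}_{\mu}$ with $\mu\neq0$, and Proposition~\ref{Prop:Deformation-Retraction} concludes.

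The hard part is making precise the GIT / Kempf--Ness description of the $\C^{*}$-limit in the generality of $G$-Higgs bundles for a real reductive Lie group $G$, in particular the identification of the limit's underlying $H^{\C}$-bundle with the associated graded $\mathrm{gr}_{\lambda}(E)$ for a 1-PS of the complexified gauge group, and the analysis of weights of $\varphi\in H^{0}(E(\mathfrak{m}^{\C})\otimes K)$ under such a 1-PS. This is classical for complex $G$ and should transfer via the Cartan decomposition $\mathfrak{g}^{\C}=\mathfrak{h}^{\C}\oplus\mathfrak{m}^{\C}$ and the general theory of $G$-Higgs bundles from \cite{GGM09}, but making these identifications fully rigorous is where the proof requires most care.
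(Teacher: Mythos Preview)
Your overall strategy coincides with the paper's: take the $\C^{*}$-limit $\lim_{z\to 0}[(E,z\varphi)]$, show the limiting Higgs field $\varphi_{0}$ is non-zero so that the limit lies in some $\mathcal{F}_{\lambda}$ with $\lambda\neq 0$, and then invoke Proposition~\ref{Prop:Deformation-Retraction}. The only substantive difference is in how you justify $\varphi_{0}\neq 0$.

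The paper does this in one line. It simply observes that if the limit lay in $\mathcal{F}_{0}$ it would be the class of $(E,0)$, and for this to be a point of the moduli space $E$ would have to be a semistable $H^{\C}$-bundle, contrary to hypothesis. The implicit reasoning is that along the entire orbit $\{(E,z\varphi)\}_{z\neq 0}$ the underlying $H^{\C}$-bundle is constantly $E$, while semistability of the underlying bundle is an open condition on $\mathcal{M}_{d}(G)$ containing $\mathcal{F}_{0}$; hence a limit in $\mathcal{F}_{0}$ would force $E$ itself to be semistable. No explicit description of the limit is needed.

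Your route through a GIT/Kempf--Ness description of the limit as an associated graded $\mathrm{gr}_{\lambda}(E)$, followed by a weight analysis of $\varphi$ and an iterated-extension argument, is aimed at the same conclusion and is plausible in outline, but it is considerably more elaborate than what the corollary requires. As you yourself flag, making those identifications rigorous for $G$-Higgs bundles with $G$ a general real reductive group is exactly the delicate part, and it would end up being harder than the statement you are proving. The paper sidesteps all of this: it never needs to know what $E_{0}$ looks like, only that a limit with zero Higgs field would certify semistability of $E$. So your proof is not wrong in spirit, but it replaces a one-line openness observation with a technical digression that is both unnecessary and harder to pin down.
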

\begin{proof}
Let $(E,\varphi)$ be a semistable Higgs bundle and suppose $\underset{t\to0}{\lim}(E,\,t\varphi)$
is $(E,\,0)$. Then $E$ is a semistable $H^{\mathbb{C}}$-bundle.
So, our hypothesis implies $\underset{t\to0}{\lim}(E,\,t\varphi)=(E_{0},\varphi_{0})$
with $\varphi_{0}\neq0$. Therefore $(E_{0},\varphi_{0})\in\mathcal{F}_{\lambda}$,
with $\lambda\neq0$ (as $||\varphi_{0}||\neq0$) and hence the result
follows using Proposition~\ref{Prop:Deformation-Retraction}. 
\end{proof}

\subsection{The associated Higgs bundle}

The following can be found in \cite{Hi92,GPR}. Again, let $G$ be
a real reductive Lie group with maximal compact subgroup $H$, and
$\mathfrak{g}^{\mathbb{C}}$ be the complexification of the Lie algebra
$\mathfrak{g}$ of $G$. Let $\sigma\colon\lie{g}^{\C}\to\lie{g}^{\C}$
be the corresponding ($\C$-antilinear) real structure and let $\theta\colon\lie{g}^{\C}\to\lie{g}^{\C}$
be the ($\C$-linear) Cartan involution. Consider the Cartan decomposition
\[
\mathfrak{g}^{\mathbb{C}}=\mathfrak{h}^{\mathbb{C}}\oplus\mathfrak{m}^{\mathbb{C}}
\]
into $\pm1$-eqigenspace for $\theta$.

For example, for $G=\SL(2,\R)$, the Cartan involution is $\theta\colon X\mapsto-X^{t}$
and the Cartan decomposition of $\mathfrak{sl}(2,\mathbb{C})$ under
$\theta$ is 
\[
\mathfrak{sl}(2,\mathbb{C})=\mathfrak{so}(2,\mathbb{C})\oplus\mathfrak{sym}_{0}(2,\mathbb{C})
\]
where $\mathfrak{so}(2,\C)$ denotes the trace zero complex diagonal
matrices, and $\mathfrak{sym}_{0}(2,\mathbb{C})$ the complex antidiagonal
matrices.

When $G$ is non-abelian, there is a $\sigma$ and $\theta$-equivariant
injective morphism 
\[
\rho':\mathfrak{sl}(2,\mathbb{C})\to\mathfrak{g}^{\mathbb{C}},
\]
such that $\rho'=\rho'_{+}\oplus\rho'_{-}$, where 
\begin{align}
\rho'_{+}:\mathfrak{so}(2,\mathbb{C})\to\mathfrak{h}^{\mathbb{C}},\quad\rho'_{-}:\mathfrak{sym}_{0}(2,\mathbb{C})\to\mathfrak{m}^{\mathbb{C}}.\label{morphism-sym-to-m}
\end{align}
Since $\SL(2,\mathbb{C})$ is simply-connected $\rho'$ lifts to 
\begin{align}
\rho:\SL(2,\mathbb{C}) & \to G^{\mathbb{C}}.\label{eq:morphism-SL(2,C)}
\end{align}

On the other hand, the restriction $\rho'|_{\mathfrak{sl}(2,\mathbb{R})}:\mathfrak{sl}(2,\mathbb{R})\to\mathfrak{g}$
lifts to a $\theta$-equivariant group homomorphism, still denoted
by $\rho$ 
\begin{align}
\rho:\SL(2,\mathbb{R}) & \to G\label{eq:morphism-SL(2,R)}
\end{align}
which takes $\SO(2)$ to $H$. We denote by $\rho_{+}$ the complexification
of the restriction $\rho|_{\SO(2)}$ 
\begin{align}
\rho_{+}:\SO(2,\mathbb{C}) & \to H^{\mathbb{C}}.\label{eq:rho+}
\end{align}
given an $\SL(2,\mathbb{R})$-Higgs bundle $(E^{\prime},\varphi^{\prime})$
we can construct a $G$-Higgs bundle $(E,\varphi)$ via \eqref{eq:rho+}
and \eqref{morphism-sym-to-m} in the following way: 
\begin{align}
E:=E^{\prime}\times_{\SO(2,\mathbb{C})}H^{\mathbb{C}},\quad\varphi:=\rho_{-}^{\prime}(\varphi^{\prime})\in H^{0}(X,E(\mathfrak{m}^{\C})\otimes K).\label{extended-G-Higgs}
\end{align}

More generally we have the following: let $f:G^{\prime}\to G$ be
a morphism of reductive Lie groups. This induces a morphism $f:{H'}^{\C}\to H^{\mathbb{C}}$,
still denoted by the same symbol. Given a $G^{\prime}$-Higgs bundle
$(E^{\prime},\varphi^{\prime})$ one can associate a $G$-Higgs bundle
$(E,\varphi)$, which is called the \emph{extended $G$-Higgs bundle}
via $f$, with $E:=E^{\prime}\times_{{H'}^{\C}}H^{\mathbb{C}}$. Moreover,
since $\varphi'\in H^{0}(X,E(\mathfrak{m}^{\prime^{\mathbb{C}}})\otimes K)$
we get a section of 
\[
E(\mathfrak{m}^{\mathbb{C}}):=E(\mathfrak{m}^{\prime^{\mathbb{C}}})\underset{i\circ F}{\times}\mathfrak{m}^{\mathbb{C}},
\]
where $i$ is the isotropy representation for $G^{\prime}$. Hence
$\varphi^{\prime}$ defines via the homomorphism $F:=D_{e}f:\mathfrak{g}^{\prime}\to\mathfrak{g}$,
a Higgs field $\varphi$ on $E$.

When $G$ is connected, $f$ induces a homomorphism between the fundamental
groups 
\[
f_{*}:\pi_{1}(G^{\prime})\to\pi_{1}(G)
\]
and the topological type of the associated $G$-Higgs bundle corresponds
to the image via the map $f_{*}$. We recall the following result
on polystability for the associated $G$-Higgs bundle:
\begin{prop}
\label{prop:extended-Higgs-bundle} Let $f:G^{\prime}\to G$ be a
morphism between reductive Lie groups (real or complex). Let $(E^{\prime},\varphi^{\prime})$
be a $G'$-Higgs bundle and $(E,\varphi)$ be the extended $G$-Higgs
bundle via $f$. Then, if $(E^{\prime},\varphi^{\prime})$ is semistable,
then so is $(E,\varphi)$. Thus the group homomorphism $f$ defines
a morphism 
\begin{eqnarray*}
\mathcal{M}_{d}(G^{\prime}) & \to & \mathcal{M}_{f_{*}d}(G)\\
(E^{\prime},\varphi^{\prime}) & \mapsto & (E,\varphi)
\end{eqnarray*}
\end{prop}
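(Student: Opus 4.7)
The plan is to invoke the Hitchin--Kobayashi correspondence for real reductive groups to transport harmonic metrics along $f$, and then to reduce the semistable case to the polystable one using a Jordan--Hölder argument.

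First, I would treat the polystable case. By the Hitchin--Kobayashi correspondence proved in \cite{GGM09}, a $G'$-Higgs bundle $(E',\varphi')$ is polystable if and only if it admits a reduction of structure group $h'$ from $H'^{\mathbb{C}}$ to $H'$ satisfying the Hitchin equation. After conjugating the maximal compacts, we may assume $f(H')\subseteq H$, so $h'$ induces a reduction $h$ of the structure group of $E = E'\times_{H'^{\mathbb{C}}}H^{\mathbb{C}}$ to $H$. Because the differential $F=D_e f$ is a Lie algebra homomorphism intertwining the Cartan involutions and compatible with the $\Ad$-invariant pairings, applying $F$ to the Hitchin equation for $(E',\varphi',h')$ yields the Hitchin equation for $(E,\varphi,h)$. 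Invoking the correspondence in the opposite direction for $G$, we conclude that $(E,\varphi)$ is polystable.

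Next I would reduce the semistable case to the polystable one. Any semistable $G'$-Higgs bundle $(E',\varphi')$ admits an associated graded $\mathrm{gr}(E',\varphi')$ which is a polystable $L'^{\mathbb{C}}$-Higgs bundle for some Levi subgroup $L'\subseteq G'$, and $(E',\varphi')$ is S-equivalent to the extension of $\mathrm{gr}(E',\varphi')$ along $L'^{\mathbb{C}}\hookrightarrow G'^{\mathbb{C}}$; see \cite{GGM09}. Since the extension construction along $f$ is functorial with respect to filtrations, the associated graded of $(E,\varphi)$ agrees with the $f$-extension of $\mathrm{gr}(E',\varphi')$ along the composed homomorphism $L'^{\mathbb{C}}\hookrightarrow G'^{\mathbb{C}}\to G^{\mathbb{C}}$. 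By the polystable case already proved, this associated graded is polystable, so $(E,\varphi)$ is semistable.

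Finally, for the moduli-space statement, the association $(E',\varphi')\mapsto (E,\varphi)$ is manifestly functorial in $(E',\varphi')$ and algebraic in families, sends polystable representatives to polystable representatives, and induces $f_*\colon \pi_1(H')=\pi_1(G')\to\pi_1(G)=\pi_1(H)$ on topological invariants; these properties are enough to produce the asserted morphism of moduli varieties. The main obstacle is the second step, namely the verification that extension along $f$ genuinely commutes with Jordan--Hölder reduction in the category of $G$-Higgs bundles. The subtlety is that the parabolic/Levi pair in $G^{\mathbb{C}}$ produced by the filtration need not be the naive image of the one in $G'^{\mathbb{C}}$, so one must chase the parabolic-reduction characterization of (semi)stability in \cite{GGM09} and compare degrees of antidominant characters under $f$; the polystable step is essentially formal once the Hitchin--Kobayashi correspondence is invoked.
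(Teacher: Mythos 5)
The paper itself does not argue this proposition directly: its proof is a one-line reduction to \cite[Corollary 5.10]{GPR}, specialized to stability parameter zero, so your proposal is in effect an attempt to reprove the cited result. Your first step (the polystable case via the Hitchin--Kobayashi correspondence of \cite{GGM09}) is essentially sound, provided you note that $f$ is compatible with the Cartan data --- this is implicit in the very definition of the extended $G$-Higgs bundle, since one needs $f(H'^{\mathbb{C}})\subseteq H^{\mathbb{C}}$ and $dF(\mathfrak{m}'^{\mathbb{C}})\subseteq\mathfrak{m}^{\mathbb{C}}$, so ``conjugating maximal compacts'' is not an extra hypothesis --- and provided you do the bookkeeping of the central constant for reductive, non-semisimple groups: at parameter $0$ the equation has vanishing central term, which is consistent because $0$-polystability forces the central component of the topological type to vanish (compare the paper's footnote that a line bundle of nonzero degree is $0$-unstable).

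The genuine gap is in your second step, and you flag it yourself without resolving it. As written the reduction is circular: the associated graded of $(E,\varphi)$ is only defined once $(E,\varphi)$ is known to be semistable, which is exactly what is to be proved, so ``the associated graded of $(E,\varphi)$ agrees with the $f$-extension of $\mathrm{gr}(E',\varphi')$, hence $(E,\varphi)$ is semistable'' is not a valid inference. A correct version would extend fibrewise along $f$ the degeneration of $(E',\varphi')$ to its Jordan--H\"older graded object (the $\mathbb{C}^{*}$-family attached to the parabolic reduction), obtaining a family whose general fibre is $(E,\varphi)$ and whose special fibre is the $f$-extension of $\mathrm{gr}(E',\varphi')$, polystable by your first step, and then invoke openness of semistability in families. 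But both ingredients --- that extension of structure group is compatible with this degeneration, which requires comparing parabolic reductions of $H'^{\mathbb{C}}$ and $H^{\mathbb{C}}$ and degrees of antidominant characters under $f$, and the openness statement for $G$-Higgs bundles --- are precisely what you leave unverified, and they are the real content of \cite[Corollary 5.10]{GPR}. (An alternative would be to show directly that a destabilizing reduction of $(E,\varphi)$ induces one of $(E',\varphi')$, but that is the same missing comparison.) The final assertion about the induced map of moduli spaces also needs the polystable case plus the families/GIT description to give an actual morphism of varieties, though this is glossed at the same level as in the paper. Until the semistable step is supplied, the proof is incomplete.
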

\begin{proof}
This follows from \cite[Corollary 5.10]{GPR}, since the stability
parameter is zero. 
\end{proof}
\begin{prop}
\label{prop:associating-unstability-principal} Let ${H'}^{\C}$ and
$H^{\C}$ be connected complex Lie groups with ${H'}^{\C}$ semisimple
and $H^{\C}$ reductive. Let $f:{H'}^{\C}\to H^{\C}$ be a morphism
with discrete kernel. Let $E^{\prime}$ be a principal $H'^{\C}$-bundle
and let $E$ be the principal $H^{\C}$-bundle obtained by extension
of structure group via $f$. If $E^{\prime}$ is unstable as a ${H'}^{\C}$-bundle,
then $E$ is unstable as a $H^{\mathbb{C}}$-bundle. \end{prop}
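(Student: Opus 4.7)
The plan is to reduce the claim about principal bundles to a statement about the associated adjoint vector bundles. By a theorem of Ramanathan, for a connected semisimple complex Lie group $K$ a principal $K$-bundle $F$ over $X$ is semistable if and only if its adjoint vector bundle $F(\lie{k}^{\C})$ is a semistable vector bundle; this criterion transfers to a connected reductive $K$ after passing to the semisimple adjoint quotient $K/Z(K)$, using that parabolic reductions of $F$ biject with those of $F/Z(K)$ and that characters of parabolics trivial on $Z(K)$ compute the same degrees of associated line bundles. So it suffices to show that $\ad(E)$ is an unstable vector bundle whenever $\ad(E')$ is.

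First I would identify $\ad(E)$ as an associated bundle of $E'$. Since $E = E' \times_{{H'}^{\C},\,f} H^{\C}$, the usual double-quotient manipulation yields
\[
\ad(E) \;=\; E \times_{H^{\C},\,\Ad} \lie{h}^{\C} \;\cong\; E' \times_{{H'}^{\C},\,\Ad\circ f} \lie{h}^{\C}.
\]
Because $\ker f$ is discrete its Lie algebra vanishes, so $df\colon \lie{h}'^{\C}\to \lie{h}^{\C}$ is injective; differentiating the identity $f(h'gh'^{-1})=f(h')f(g)f(h')^{-1}$ shows that $df$ is ${H'}^{\C}$-equivariant (from $\Ad$ on $\lie{h}'^{\C}$ to $\Ad\circ f$ on $\lie{h}^{\C}$). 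Applying the associated bundle functor produces a vector subbundle inclusion $\ad(E') \hookrightarrow \ad(E)$ with locally free quotient $E'(\lie{h}^{\C}/\lie{h}'^{\C})$. Next I would compute slopes: since ${H'}^{\C}$ is semisimple, $\det\circ\Ad$ is trivial, hence $\deg\ad(E')=0$; because $\Ad$ of any reductive group factors through the (semisimple) adjoint group, also $\deg\ad(E)=0$. Both adjoint bundles thus have slope zero.

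Finally, assuming $E'$ is unstable, Ramanathan's criterion produces a coherent subsheaf $V\subset \ad(E')$ with $\mu(V)>0$. The composition $V\hookrightarrow\ad(E')\hookrightarrow\ad(E)$ then realizes $V$ as a subsheaf of $\ad(E)$ of positive slope, forcing $\ad(E)$ to be unstable, and therefore $E$ is unstable. The step I expect to need most care is the bookkeeping around centers required to apply Ramanathan's criterion cleanly to the reductive (not necessarily semisimple) group $H^{\C}$: one must check that a destabilizing reduction of the $H^{\C}/Z(H^{\C})$-bundle $E/Z(H^{\C})$ lifts to a destabilizing parabolic reduction of $E$ itself, via the bijection between parabolics of $H^{\C}$ containing $Z(H^{\C})$ and parabolics of the adjoint group, with matching degrees for characters trivial on the center. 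Once that foundation is set, the geometric content---that a destabilizing subsheaf of $\ad(E')$ persists inside the larger adjoint bundle $\ad(E)$ because both have slope zero---is essentially automatic.
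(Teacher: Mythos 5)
Your proof is correct, but it follows a genuinely different route from the paper's. The paper argues entirely at the level of parabolic reductions: it factors $f$ through ${H'}^{\C}/\ker(f)$, treats the surjective step via \cite[Proposition~7.1]{Ra75} (the discrete-kernel hypothesis guaranteeing that the image of a proper parabolic is a proper parabolic), and then quotes \cite[Proposition~3.13]{GO16} for the remaining injective case. You instead translate everything into slopes of adjoint vector bundles: the characteristic-zero equivalence between semistability of a principal bundle (for a semisimple group, extended to the reductive target by passing to the adjoint quotient) and semistability of its adjoint bundle, together with the observation that a discrete kernel makes $df$ injective, so that $\ad(E')\hookrightarrow\ad(E)$ is a subbundle of a slope-zero bundle and any positive-slope destabilizing subsheaf of $\ad(E')$ destabilizes $\ad(E)$. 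Your route is more self-contained (it replaces the citation of \cite{GO16} by the classical adjoint-bundle criterion) and it makes the role of both hypotheses transparent: semisimplicity of ${H'}^{\C}$ is exactly what ensures that instability of $E'$ is detected by $\ad(E')$ --- this is the subtlety flagged in the paper's footnote, since for $\C^{*}$ the adjoint bundle sees nothing --- while the discrete kernel gives the fibrewise injection. One point worth making explicit: what your argument produces is instability of $E$ in Ramanathan's sense (an unstable adjoint bundle, equivalently a destabilizing proper parabolic reduction); since the $0$-semistability used in the paper for the reductive group $H^{\C}$ includes all the proper-parabolic conditions, Ramanathan-instability implies instability in the paper's sense, so your conclusion is indeed the one required. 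The trade-off is that the paper's proof is shorter and stays uniform with the Higgs-bundle framework it cites elsewhere, whereas yours leans on the (standard, but characteristic-zero) adjoint-bundle criterion and the bookkeeping with centers that you correctly identify as the delicate step.
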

\begin{proof}
Since ${H'}^{\C}$ is semisimple, the unstable $H'^{\C}$-bundle $E'$
is destabilized by a reduction to a proper parabolic subgroup\footnote{The semisimple assumption on ${H'}^{\C}$ is a subtle point: for example,
a line bundle $L$ with $\deg(L)\neq0$ is $0$-unstable, however,
there is no reduction to a proper parabolic of the structure group
$\C^{*}$.}. Now, if $f$ is surjective, the result follows from \cite[Proposition~7.1]{Ra75}
--- note that one needs to ensure that the image of a proper parabolic
in ${H'}^{\C}$ is a proper parabolic in $H^{\C}$, and the hypothesis
on the kernel of $f$ achieves this. For the general case, suppose
then that $E^{\prime}$ is not stable. It follows that the principal
${H'}^{\C}/\ker(f)$-bundle obtained by extension of structure group
via ${H'}^{\C}\to{H'}^{\C}/\ker(f)$ is also unstable. Thus, since
$E$ is obtained by extension of structure group via $f\colon{H'}^{\C}/\ker(f)\to H^{\C}$,
we may assume that $f$ is injective. The result now follows from
\cite[Proposition~3.13]{GO16}; note that this is result about $G$-Higgs
bundles but of course also applies to principal bundles, viewed as
Higgs bundles with vanishing Higgs field. \end{proof}
\begin{rem}
Note that the case of surjective $f$ is equally valid for $G$-Higgs
bundles, with essentially the same proof as that of \cite[Proposition~7.1]{Ra75}.
Thus Proposition~\ref{prop:associating-unstability-principal} in
fact applies to $G$-Higgs bundles as well. We shall, however, not
need this. 
\end{rem}
The following result shows the existence of $G$-Higgs bundles $(E,\varphi)\in\mathcal{F}_{\lambda}$,
for $\lambda\neq0$ as in the hypothesis of Corollary~\ref{cor:retraction}:
\begin{prop}
\label{existence-unstable-bundle} Let $G$ be a non-abelian (real
or complex) reductive connected Lie group. Then there exists a semistable
Higgs bundle $(E,\varphi)\in\mathcal{M}(G)$ with $E$ an unstable
principal $H^{\mathbb{C}}$-bundle. \end{prop}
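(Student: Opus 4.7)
The plan is to construct the required $G$-Higgs bundle as an extension of structure group from an explicit semistable $\SL(2,\R)$-Higgs bundle, using the principal $\mathfrak{sl}_{2}$-embedding of $G$ (which is available since $G$ is non-abelian) and then invoking Propositions~\ref{prop:extended-Higgs-bundle} and~\ref{prop:associating-unstability-principal}.

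The first step is to build the model $\SL(2,\R)$-Higgs bundle. Since $g\ge 2$, I fix a line bundle $L$ on $X$ with $0<\deg(L)\le g-1$; then $\deg(L^{-2}\otimes K)\ge 0$, so there is a nonzero section $\varphi^-\in H^{0}(X,L^{-2}\otimes K)$. I put $E'=L\oplus L^*$ with the standard symmetric form and Higgs field with $\varphi^+=0$ and the chosen $\varphi^-$. By the $\SL(2,\R)$ example in Section~2, the pair $(E',\varphi')$ is semistable. Its underlying $\SO(2,\C)\cong\C^*$-bundle is $L$ of nonzero degree, which is already $0$-unstable in the convention used in the paper (cf.\ the footnote in the proof of Proposition~\ref{prop:associating-unstability-principal}); more importantly, the associated $\SL(2,\C)$-bundle $\tilde E':=E'\times_{\SO(2,\C)}\SL(2,\C)$ has associated rank-$2$ vector bundle $L\oplus L^*$ of trivial determinant, which is unstable because $L$ is a sub-line-bundle of positive degree.

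The second step is the extension. Because $G$ is non-abelian reductive, $\mathfrak{g}^\C$ contains a principal $\mathfrak{sl}_2$-triple, producing the embedding $\rho':\mathfrak{sl}(2,\C)\hookrightarrow\mathfrak{g}^\C$ of \eqref{morphism-sym-to-m} and its lifts $\rho$ in \eqref{eq:morphism-SL(2,R)} and \eqref{eq:morphism-SL(2,C)}; both lifts are injective on Lie algebras, hence have discrete kernel. I let $(E,\varphi)$ be the extended $G$-Higgs bundle obtained from $(E',\varphi')$ via $\rho$, so $E=E'\times_{\SO(2,\C)}H^\C$ and $\varphi=\rho'_-(\varphi')$. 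By Proposition~\ref{prop:extended-Higgs-bundle}, $(E,\varphi)$ is semistable.

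The third and key step is to show that $E$ is unstable as an $H^\C$-bundle. When $G$ is complex, $H^\C=G$ and $E$ is obtained by extending the unstable $\SL(2,\C)$-bundle $\tilde E'$ via $\rho:\SL(2,\C)\to G$; since $\SL(2,\C)$ is semisimple and $\rho$ has discrete kernel, Proposition~\ref{prop:associating-unstability-principal} gives the conclusion at once. When $G$ is real and not complex the morphism $\rho:\SL(2,\C)\to G^\C$ need not land in $H^\C$, so Proposition~\ref{prop:associating-unstability-principal} does not apply directly with target $H^\C$; instead, I would produce a destabilizing parabolic reduction of $E$ by hand. The infinitesimal generator $\xi\in\mathfrak{h}^\C$ of the one-parameter subgroup $\rho_+(\SO(2,\C))$ is a semisimple element with integer adjoint eigenvalues and determines a proper parabolic $P_\xi\subset H^\C$ whose Levi contains $\rho_+(\SO(2,\C))$; the tautological reduction $E_{P_\xi}:=E'\times_{\SO(2,\C)}P_\xi$, together with a character $\chi$ of $P_\xi$ whose restriction to $\rho_+(\SO(2,\C))$ is $t\mapsto t^{k}$ with $k>0$, gives an associated line bundle $E_{P_\xi}(\chi)\cong L^{k}$ of degree $k\deg(L)>0$, which violates Ramanathan semistability.

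The main obstacle is this last paragraph: verifying that $\xi$ is non-central in $\mathfrak{h}^\C$, so that $P_\xi$ is a proper parabolic and a suitably anti-dominant $\chi$ exists. This is fine whenever $\mathfrak{h}^\C$ is non-abelian. In the degenerate case where $H$ is a torus (e.g.\ $G=\SL(2,\R)$), the parabolic argument collapses, but then $H^\C\cong(\C^*)^r$ and the underlying $H^\C$-bundle is itself a product of line bundles of nonzero degree, which is $0$-unstable in the paper's convention; so the conclusion follows directly from Step~1 without invoking Proposition~\ref{prop:associating-unstability-principal} at all.
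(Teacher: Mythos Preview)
Your complex case is essentially the paper's argument (with $K^{1/2}$ replaced by an arbitrary $L$ of degree in $(0,g-1]$, which is immaterial for this proposition).

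For the real case, the paper takes a much cleaner route than your direct parabolic construction. Instead of producing a destabilizing reduction of $E$ inside $H^{\C}$, the paper passes to $G^{\C}$: the extension $\tilde E := E\times_{H^{\C}}G^{\C}$ coincides with the principal $G^{\C}$-bundle obtained from the unstable $\SL(2,\C)$-bundle $L\oplus L^{-1}$ via $\rho\colon \SL(2,\C)\to G^{\C}$, so by the complex case (i.e.\ Proposition~\ref{prop:associating-unstability-principal} applied to $\SL(2,\C)\to G^{\C}$) $\tilde E$ is unstable. One then uses the \emph{contrapositive} of Proposition~\ref{prop:extended-Higgs-bundle} for the inclusion $H^{\C}\hookrightarrow G^{\C}$, applied to bundles with zero Higgs field: were $E$ semistable as an $H^{\C}$-bundle, $\tilde E$ would be semistable as a $G^{\C}$-bundle. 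Hence $E$ is unstable. This bypasses any analysis of parabolics in $H^{\C}$.

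Your argument, by contrast, has a genuine gap precisely where you flag it. The sentence ``this is fine whenever $\mathfrak{h}^{\C}$ is non-abelian'' is an assertion, not a proof: you have not shown that the semisimple element $\xi=\rho'_{+}(h)$ of the Kostant--Rallis $\mathfrak{sl}_{2}$-triple is non-central in $\mathfrak{h}^{\C}$. If $\xi$ happened to lie in the centre of $\mathfrak{h}^{\C}$ (with $\mathfrak{h}^{\C}$ non-abelian), then $P_{\xi}=H^{\C}$ and no proper parabolic reduction arises. One could still rescue the conclusion via a central character of $H^{\C}$, exactly as you do in the torus case, but you did not cover this possibility and your case split is therefore incomplete. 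The paper's contrapositive trick avoids the question entirely in one line; I would recommend replacing your third step by that argument.
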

\begin{proof}
If $G$ is complex, consider the $\SL(2,\mathbb{C})$-Higgs bundle
\[
(K^{1/2}\oplus K^{-1/2},\varphi=\left(\begin{array}{cc}
0 & 0\\
1 & 0
\end{array}\right)).
\]
Clearly this is a stable $\SL(2,\C)$-Higgs bundle and the underlying
$\SL(2,\C)$-bundle $K^{1/2}\oplus K^{-1/2}$ is unstable. Now we
take the extended $G$-Higgs bundle via \eqref{eq:morphism-SL(2,C)}
which we denote by $(E,\varphi)$. By Proposition \ref{prop:extended-Higgs-bundle}
this is semistable and by Proposition \ref{prop:associating-unstability-principal}
$E$ is an unstable principal $G$-bundle.

For $G$ real, we can use a variation of the same idea. Consider the
basic $\SL(2,\mathbb{R})$-Higgs bundle 
\begin{equation}
(K^{1/2}\oplus K^{-1/2},Q=\left(\begin{array}{cc}
0 & 1\\
1 & 0
\end{array}\right),\varphi=\left(\begin{array}{cc}
0 & 0\\
1 & 0
\end{array}\right))\label{basic-SL(2,R)}
\end{equation}
where $1$ is the canonical section of $\Hom(K^{1/2},K^{-1/2}\otimes K)$.
Clearly this is a stable $\SL(2,\mathbb{R})$-Higgs bundle.

Let $(E,\varphi)$ be the $G$-Higgs bundle obtained from the basic
$\SL(2,\mathbb{R})$-Higgs bundle \eqref{basic-SL(2,R)} via \eqref{extended-G-Higgs}.
Then, since the diagram 
\[
\begin{CD}\SL(2,\R)@>{\rho}>>\hspace{-40pt}G@.\\
@VVV\hspace{-40pt}@VVV@.\\
\SL(2,\C)@>{\rho}>>G^{\C}\supset H^{\C}
\end{CD}
\]
commutes, we can use the argument of the previous paragraph to conclude
that the $G^{\C}$-Higgs bundle $(\tilde{E},\tilde{\varphi})$ obtained
from $(E,\varphi)$ by extension of structure group via $G\subset G^{\C}$
is a semistable $G^{\C}$-Higgs bundle, whose underlying principal
$G^{\C}$-bundle $\tilde{E}$ is unstable. Finally note that $\tilde{E}$
is obtained from $E$ by extension of structure group via the inclusion
$H^{\C}\subset G^{\C}$. Hence the principal $H^{\C}$-bundle is also
unstable (cf.\ Proposition~\ref{prop:extended-Higgs-bundle}). 
\end{proof}

\subsection{Reducibility of the nilpotent cone}

Here we deduce reducibility of the nilpotent cone when $G$ is a connected
reductive complex Lie group. Thus, in this subsection $G=H^{\C}$.
\begin{prop}
\label{top-type-complex} Let $G$ be a non-abelian connected reductive
complex Lie group. Then the topological type of the extended $G$-Higgs
bundle $(E,\varphi)$ constructed in Proposition~\ref{existence-unstable-bundle}
is zero. \end{prop}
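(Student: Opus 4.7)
The plan is to unwind the definition of the extended bundle in Proposition~\ref{existence-unstable-bundle} (complex case) and then apply the functoriality of the topological invariant under extension of structure group. In the complex case we have $H^{\C}=G$, and the extended $G$-Higgs bundle is built from the stable $\SL(2,\C)$-Higgs bundle $(K^{1/2}\oplus K^{-1/2},\varphi)$ via the group homomorphism $\rho\colon \SL(2,\C)\to G$ of \eqref{eq:morphism-SL(2,C)}. Thus $E\cong (K^{1/2}\oplus K^{-1/2})\times_{\SL(2,\C)}G$ is obtained from the principal $\SL(2,\C)$-bundle $E'=K^{1/2}\oplus K^{-1/2}$ by extension of structure group via $\rho$.

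The next step is to recall that for any connected complex Lie group $H$ the topological types of principal $H$-bundles over $X$ are classified by $\pi_{1}(H)$, and that for a morphism $f\colon H'\to H$ of connected groups the topological invariant of the extended bundle is the image under the induced homomorphism $f_{\ast}\colon \pi_{1}(H')\to\pi_{1}(H)$ of the invariant of the original bundle. Applying this with $H'=\SL(2,\C)$, $H=G$, and $f=\rho$, I obtain
\[
c(E)=\rho_{\ast}(c(E'))\in\pi_{1}(G).
\]

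The conclusion is then immediate: since $\SL(2,\C)$ is simply connected, $\pi_{1}(\SL(2,\C))=0$, hence $c(E')=0$ and therefore $c(E)=\rho_{\ast}(0)=0$. There is essentially no technical obstacle here; the only point that merits emphasis is the functoriality statement, which in this excerpt has already been noted when discussing the map $f_{\ast}\colon\pi_{1}(G')\to\pi_{1}(G)$ associated to the extension of structure group construction just before Proposition~\ref{prop:extended-Higgs-bundle}. The argument is thus short and relies solely on the simple connectedness of $\SL(2,\C)$ together with this naturality of the topological invariant.
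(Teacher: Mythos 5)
Your argument is correct and is essentially the paper's own proof: both use that the topological type of the extended bundle is the image of the original type under the induced map $\rho_{\ast}\colon\pi_{1}(\SL(2,\C))\to\pi_{1}(G)$, which is trivial because $\SL(2,\C)$ is simply connected. No differences worth noting.
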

\begin{proof}
The topological type of the basic $\SL(2,\mathbb{C})$-Higgs bundle:
\[
(K^{1/2}\oplus K^{-1/2},\varphi=\left(\begin{array}{cc}
0 & 0\\
1 & 0
\end{array}\right))
\]
which we consider in the proof of Proposition~\ref{existence-unstable-bundle}
is zero and hence the topological type of the extended $G$-Higgs
bundle is zero as well, by the induced homomorphism between the fundamental
groups $i_{*}:\thinspace\pi_{1}(\SL(2,\mathbb{C}))\to\pi_{1}(G)$
which is indeed trivial in this case. 
\end{proof}
For the proof of the next result we shall need the notion of very
stable $G$-bundles which we recall from \cite{La88,BR94}: A principal
$G$-bundle $P$ is said to be \emph{very stable} if $H^{0}(X,\mathrm{ad}P\otimes K)$
does not contain any non-zero nilpotent Higgs field.
\begin{prop}
\label{rem:nilpotent-element} Let $G$ be a non-abelian connected
reductive complex Lie group. Then, the nilpotent cone contains a component
which does not belong to $\mathcal{N}_{0}(G)$. \end{prop}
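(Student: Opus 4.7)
The strategy is to exhibit an explicit point of the nilpotent cone with non-zero Higgs field, and then take any irreducible component of $\mathcal{H}^{-1}(0)$ passing through that point.

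First, applying Proposition~\ref{existence-unstable-bundle} together with Proposition~\ref{top-type-complex} produces a semistable $G$-Higgs bundle $(E,\varphi) \in \mathcal{M}_0(G)$ whose underlying principal $G$-bundle $E$ is unstable. Consider the $\mathbb{C}^{*}$-limit
\[
(E_0,\varphi_0) := \lim_{z \to 0} z \cdot (E,\varphi),
\]
which exists in $\mathcal{M}_0(G)$ because properness of the Hitchin map confines the orbit $\{z \cdot (E,\varphi) : |z|\leq 1\}$ to a compact subset of the moduli space. Two observations are then needed. (i) The limit lies in $\mathcal{H}^{-1}(0)$: indeed, $\mathcal{H}$ is $\mathbb{C}^{*}$-equivariant with strictly positive weights, since $\beta_i(z\varphi) = z^{d_i}\beta_i(\varphi)$ with $d_i \geq 1$, so letting $z \to 0$ yields $\mathcal{H}(E_0,\varphi_0) = 0$. (ii) The Higgs field $\varphi_0$ is non-zero, so $(E_0,\varphi_0) \notin \mathcal{F}_0 = \mathcal{N}_0(G)$. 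Claim (ii) is precisely the step carried out inside the proof of Corollary~\ref{cor:retraction}: instability of $E$ as a principal bundle prevents the $\mathbb{C}^{*}$-limit from being a zero-Higgs-field pair, forcing $(E_0,\varphi_0) \in \mathcal{F}_\lambda$ for some $\lambda \neq 0$.

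To conclude, pick any irreducible component $Z$ of the nilpotent cone $\mathcal{H}^{-1}(0) \subset \mathcal{M}_0(G)$ containing the point $(E_0,\varphi_0)$. Since $(E_0,\varphi_0) \notin \mathcal{N}_0(G)$, one has $Z \not\subseteq \mathcal{N}_0(G)$, producing the desired irreducible component of the nilpotent cone that does not belong to $\mathcal{N}_0(G)$. The main obstacle is observation (ii), i.e.\ showing that the $\mathbb{C}^{*}$-limit must carry a non-trivial nilpotent Higgs field precisely when the underlying principal bundle is unstable; but this is exactly what was already established in the proof of Corollary~\ref{cor:retraction}, so no additional work is required here.
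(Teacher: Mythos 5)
There is a genuine gap. The first half of your argument (producing a $\C^{*}$-fixed point $(E_{0},\varphi_{0})$ with $\varphi_{0}\neq0$ in the nilpotent cone, via Propositions~\ref{existence-unstable-bundle} and \ref{top-type-complex} and the limit argument from the proof of Corollary~\ref{cor:retraction}) coincides with the first step of the paper's proof. But your reading of the conclusion --- ``take any irreducible component $Z$ through $(E_{0},\varphi_{0})$; it is not \emph{contained in} $\mathcal{N}_{0}(G)$'' --- trivializes the statement to ``the nilpotent cone has a point outside $\mathcal{N}_{0}(G)$'', and that is too weak for the purpose the proposition serves: Theorem~\ref{thm:reducibility-nilpotent} deduces non-irreducibility of $\mathcal{H}^{-1}(0)$ as \emph{immediate} from this proposition. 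If the nilpotent cone were irreducible, its unique component would still fail to be contained in $\mathcal{N}_{0}(G)$ (it contains points with nonzero Higgs field), so your version of the statement is perfectly compatible with irreducibility and yields nothing for the theorem. What is actually needed is a component to which $\mathcal{N}_{0}(G)=\mathcal{F}_{0}$ does \emph{not} belong, i.e.\ one not containing $\mathcal{F}_{0}$, together with the fact that $\mathcal{F}_{0}$ is not absorbed into the closure of the nonzero strata.

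This is exactly the second half of the paper's proof, which your proposal omits: writing the cone as $\bigcup_{\lambda}\bar{D}_{\lambda}$ (Proposition~\ref{prop:Morse-Nil}), one must show that $\mathcal{F}_{0}$ is \emph{not contained} in $\bar{D}_{\lambda}$ for the relevant $\lambda\neq0$. The paper does this by invoking the existence of very stable $G$-bundles \cite[Corollary 5.6]{BR94}: a very stable bundle $E$ admits no nonzero nilpotent Higgs field, so $(E,0)\in\mathcal{F}_{0}$ cannot lie in $\bar{D}_{\lambda}$, whence $\mathcal{F}_{0}\not\subset\bar{D}_{\lambda}$ and the cone must have a component distinct from the one(s) containing $\mathcal{N}_{0}(G)$. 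Without this ingredient your argument cannot rule out that the entire nilpotent cone is a single irreducible variety containing $\mathcal{N}_{0}(G)$ as a proper closed subvariety, so the reducibility statement (the whole point of the proposition, and the reason the complex reductive hypothesis is used --- very stable bundles are not available for real $G$, as the paper remarks) is not established. A minor additional point: the existence of $\lim_{z\to0}z\cdot(E,\varphi)$ does not follow from compactness of the orbit closure alone (that only gives accumulation points); it rests on Simpson/Bia{\l}ynicki-Birula theory, though the paper also takes this for granted.
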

\begin{proof}
It follows from Proposition~\ref{existence-unstable-bundle} and
Proposition~\ref{top-type-complex} that there exists a semistable
$G$-Higgs bundle $(E,\varphi)$ of trivial topological type for which
$E$ is unstable as a principal $H^{\mathbb{C}}$-bundle. This implies
that there is some $\lambda\neq0$ such that $F_{\lambda}$ is non
empty, see proof of Corollary \ref{cor:retraction}. And on the other
hand, using the existence of very stable $G$-bundles result, \cite[Corollary 5.6]{BR94},
we can conclude that $\mathcal{F}_{0}$ is not contained in $\bar{D}_{\lambda}$
and hence the result follows. \end{proof}
\begin{thm}
\label{thm:reducibility-nilpotent} Let $G$ be a non-abelian connected
reductive complex Lie group. Then the nilpotent cone in the moduli
space $\mathcal{M}_{0}(G)$ of $G$-Higgs bundles of trivial topological
type is not irreducible. \end{thm}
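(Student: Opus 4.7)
The strategy is to deduce the theorem as an essentially immediate corollary of Proposition~\ref{rem:nilpotent-element}, repackaged via the decomposition of the nilpotent cone furnished by Proposition~\ref{prop:Morse-Nil}. All the substantial geometric input has already been assembled: the existence of a semistable $G$-Higgs bundle of trivial topological type whose underlying $H^{\C}$-bundle is unstable (Propositions~\ref{existence-unstable-bundle} and~\ref{top-type-complex}), and the existence of very stable principal $G$-bundles from \cite{BR94}.

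First I would specialize Proposition~\ref{prop:Morse-Nil} to the trivial topological type to write
\[
\mathcal{H}^{-1}(0) \;=\; \mathcal{F}_0 \,\cup\, \bigcup_{\lambda\neq 0}\bar{D}_\lambda
\]
inside $\mathcal{M}_0(G)$, where $\mathcal{F}_0 = \mathcal{N}_0(G)$. Proposition~\ref{rem:nilpotent-element} then exhibits some $\lambda \neq 0$ for which $\bar{D}_\lambda$ is a nonempty closed subvariety of $\mathcal{H}^{-1}(0)$ not contained in $\mathcal{F}_0$; this provides an irreducible component of the nilpotent cone lying outside $\mathcal{N}_0(G)$.

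Next I would check that, conversely, $\mathcal{F}_0$ is not contained in $\bar{D}_\lambda$. A very stable $G$-bundle $P$ of trivial topological type (available by \cite[Corollary~5.6]{BR94}) admits no nonzero nilpotent Higgs field, so $(P,0) \in \mathcal{F}_0$ cannot be written as $\lim_{z\to \infty} z\cdot (E,\varphi)$ with $\varphi\neq 0$ nilpotent; hence $(P,0)\notin \bar{D}_\mu$ for any $\mu\neq 0$. Combined with the obvious fact that $\bar{D}_\lambda\not\subset \mathcal{F}_0$ (a generic point of $D_\lambda$ has nonzero Higgs field), this produces two closed subvarieties of the nilpotent cone with neither contained in the other.

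Finally I would conclude by extracting an irreducible component of $\mathcal{F}_0$ not contained in $\bar{D}_\lambda$ together with an irreducible component of $\bar{D}_\lambda$ not contained in $\mathcal{F}_0$; these are two distinct maximal irreducible closed subsets of $\mathcal{H}^{-1}(0)$, so the nilpotent cone is reducible. The only possible obstacle is purely bookkeeping, namely ensuring that all objects live in the connected component indexed by $d=0$, which is precisely what Proposition~\ref{top-type-complex} is there to guarantee; the real work has already been done upstream.
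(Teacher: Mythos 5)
Your proposal is correct and takes essentially the same route as the paper: the theorem is deduced from Proposition~\ref{rem:nilpotent-element}, whose proof uses precisely the ingredients you assemble (the decomposition of the nilpotent cone into $\mathcal{F}_0$ and the closures $\bar{D}_\lambda$ via Proposition~\ref{prop:Morse-Nil}, the nonempty $\mathcal{F}_\lambda$ with $\lambda\neq 0$ coming from Propositions~\ref{existence-unstable-bundle} and~\ref{top-type-complex}, and very stable bundles from \cite[Corollary~5.6]{BR94} to guarantee $\mathcal{F}_0\not\subset\bar{D}_\lambda$). Only your closing sentence should be rephrased: a component of $\mathcal{F}_0$ or of $\bar{D}_\lambda$ need not be a maximal irreducible subset of $\mathcal{H}^{-1}(0)$, so conclude instead by the standard fact that an irreducible variety cannot be covered by finitely many proper closed subsets (every irreducible component of the cone lies in $\mathcal{F}_0$ or in some $\bar{D}_\mu$, and your two exhibited points force at least two distinct components).
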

\begin{proof}
It is immediate from Proposition~\ref{rem:nilpotent-element}.\end{proof}
\begin{rem}
The above result was shown in \cite{BF} in the semisimple complex
case. Our result extends this to the complex reductive case. Since
in the case of real reductive $G$ we do not have existence result
of very stable $G$-bundles we could not conclude reducibility of
the nilpotent cone for this case. 
\end{rem}

\subsection{Non retracting and topological type}

By putting together our previous result here we prove that the moduli
space of $G$-Higgs bundles does not deformation retract onto the
moduli space of principal bundles. Since we want to study the obstructions
to a deformation retraction from $\mathcal{M}(G)$ to $\mathcal{N}(H^{\mathbb{C}})$,
we should consider separately each topological type, and here we consider
trivial topological type. Thus, in order to apply Corollary~\ref{cor:retraction},
we should look for a semistable $G$-Higgs bundle $(E,\varphi)$ in
$\mathcal{M}_{0}(G)$ for which $E$ is unstable as a principal $H^{\mathbb{C}}$-bundle.
The following result shows that Proposition~\ref{existence-unstable-bundle}
gives a topologically trivial $G$-Higgs bundle with unstable underlying
$H^{\mathbb{C}}$-bundle.
\begin{prop}
\label{pro:topological-type} We have the following: 
\begin{itemize}
\item[(i)] Let $G$ be a non-abelian connected simple real Lie group of Hermitian
non-tube type. Then the topological type of the extended $G$-Higgs
bundle $(E,\varphi)$ constructed in Proposition~\ref{existence-unstable-bundle}
is zero. 
\item[(ii)] Let $G$ be a non-abelian connected reductive real Lie group of non-Hermitian
type. Then there is a polystable $G$-Higgs bundle $(E,\varphi)$
of trivial topological type such that $E$ is an unstable $H^{\mathbb{C}}$-bundle. 
\end{itemize}
\end{prop}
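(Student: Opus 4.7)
The bundle $E$ constructed in Proposition~\ref{existence-unstable-bundle} is the extension of the $\SO(2,\C)$-bundle underlying $(K^{1/2}\oplus K^{-1/2},Q)$ along $\rho_{+}\colon\SO(2,\C)\to H^{\C}$ from \eqref{eq:rho+}. Since that $\SO(2,\C)$-bundle has degree $g-1\in\pi_{1}(\SO(2))=\Z$, the topological type of $E$ in $\pi_{1}(H)$ equals $(g-1)\cdot\rho_{+,*}(1)$, where $\rho_{+,*}$ is the induced map on fundamental groups. Thus both parts of the proposition reduce to controlling the image $\rho_{+,*}(1)\in\pi_{1}(H)$, with the freedom to choose $\rho$ in the construction.

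For part (i), $\pi_{1}(H)\cong\Z$ is torsion-free for simple Hermitian non-tube type $G$ (namely $\SU(p,q)$ with $p\neq q$, $\SO^{*}(4n+2)$, and the exceptional case $E_{6}$), generated by the loop in the one-parameter centre $\exp(\R J)$ of $H$. Hence $\rho_{+,*}(1)$ is detected by the projection onto $\C J\subset\lie h^{\C}$ of the element $Z=\rho'_{+}(h)\in\lie h^{\C}$. I would invoke the structural fact that the restricted root system of a Hermitian non-tube symmetric space is of type $BC_{r}$ (rather than $C_{r}$ as in the tube case), which provides a short non-compact root $\lambda$ with $\lambda(J)=0$. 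Choosing the $\mathfrak{sl}_{2}$-triple $(E_{-\lambda},E_{\lambda},H_{\lambda})$ to underlie $\rho'$ then gives $Z=H_{\lambda}\in[\lie h,\lie h]^{\C}$, so $\rho_{+,*}(1)=0$ and the topological type of $E$ vanishes.

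For part (ii), split into cases. In the complex reductive case, $G$ has discrete centre in each simple factor of its maximal compact and hence is non-Hermitian as a real group; Propositions~\ref{existence-unstable-bundle} and~\ref{top-type-complex} already produce a semistable $G$-Higgs bundle of trivial topological type with unstable underlying $H^{\C}=G$-bundle, and polystability follows by passage to the Jordan--H\"older associated graded, which preserves both the topological type and the unstability of the underlying bundle. In the real non-Hermitian reductive case, $\lie z(\lie h)$ is trivial, so $\rho'_{+}$ lands in $[\lie h,\lie h]^{\C}$ and $\rho_{+,*}(1)$ is a torsion element of the finite group $\pi_{1}(H)$. One either selects a $\rho$ (e.g.\ a principal or suitably covered $\mathfrak{sl}_{2}$-embedding) for which $\rho_{+,*}(1)=0$, or, failing this, replaces $K^{1/2}$ by a line bundle $L$ whose degree is a positive multiple of the order $n$ of $\rho_{+,*}(1)$ with $\deg L\le g-1$: the extension of $(L\oplus L^{-1},Q,\varphi)$ to $G$ via $\rho$ is polystable (by the slope computation of the basic example), has trivial topological type by construction, and has unstable underlying $H^{\C}$-bundle by the complexification argument of Proposition~\ref{existence-unstable-bundle} combined with Proposition~\ref{prop:associating-unstability-principal}.

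The main obstacle is the Lie-theoretic verification in (i): identifying $\rho_{+,*}(1)$ with the central projection of $Z$, and producing the short non-compact root $\lambda$ with $\lambda(J)=0$ precisely in the non-tube case. This is the exact point at which the tube/non-tube dichotomy enters materially into the proof, and it rests on the structure theory of Harish-Chandra's strongly orthogonal systems in Hermitian symmetric spaces.
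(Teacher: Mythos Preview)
For part (ii), the paper's argument is much simpler than yours and avoids a real gap. The paper passes to the universal cover $p\colon\tilde G\to G$: since $\pi_1(\tilde G)=0$, the $\tilde G$-Higgs bundle produced by Proposition~\ref{existence-unstable-bundle} automatically has trivial topological type, and extending structure group along $p$ (using Propositions~\ref{prop:extended-Higgs-bundle} and~\ref{prop:associating-unstability-principal}) gives the required $G$-Higgs bundle directly. Your route instead tries to kill the torsion class $\rho_{+,*}(1)$ by replacing $K^{1/2}$ with a line bundle $L$ whose degree is a positive multiple of $n=\lvert\rho_{+,*}(1)\rvert$ subject to $0<\deg L\le g-1$; this fails whenever $n>g-1$, and in particular for $g=2$ it would force $n=1$, which is precisely what you have not established. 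The alternative of ``selecting a $\rho$ with $\rho_{+,*}(1)=0$'' is left entirely unjustified.

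For part (i), the paper simply invokes \cite[Propositions~7.1 and~7.2]{GPR}, so your attempt at a direct Lie-theoretic argument is a genuinely different route --- but the specific mechanism you propose is wrong. For any non-compact root $\lambda$ relative to a compact Cartan subalgebra, the root space lies in $\mathfrak m^{\pm}$ and $\ad(J)$ acts on it by $\pm i$, so $\lambda(J)\neq0$; consequently the coroot $H_\lambda=[E_\lambda,E_{-\lambda}]$ always has nonzero component along $\C J$, and an $\mathfrak{sl}_2$-triple attached to a single non-compact root never has its semisimple element in $[\mathfrak h,\mathfrak h]^{\C}$. The $C_r$/$BC_r$ dichotomy for the restricted root system is indeed where tube versus non-tube enters in \cite{GPR}, but it does so through the structure of the principal normal $\mathfrak{sl}_2$ used in the Hitchin--Kostant--Rallis section and the resulting Toledo-invariant computation, not through the existence of a short non-compact root annihilating $J$. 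Note also that part (i) concerns the specific bundle of Proposition~\ref{existence-unstable-bundle}, so you are not free to choose $\rho$ at will.
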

\begin{proof}
$Part\,(i)$ follows from \cite[Proposition 7.1, Proposition 7.2]{GPR}.
To prove $Part\,(ii)$, let $\tilde{G}$ be the universal cover of
$G$ and hence we have a surjective Lie group homomorphism 
\[
p:\tilde{G}\to G
\]
such that $\ker(p)$ lies in the center of $\tilde{G}$. By Proposition~\ref{existence-unstable-bundle}
we obtain a polystable $\tilde{G}$-Higgs bundle $(\tilde{E},\tilde{\varphi})$
with unstable $H^{\mathbb{C}}$-bundle and since $\tilde{G}$ is simply-connected
the topological type of $\tilde{E}$ is trivial. Therefore, by using
Proposition~\ref{prop:associating-unstability-principal} and Proposition~\ref{prop:extended-Higgs-bundle}
the extended $G$-Higgs bundle via the covering map is the desired
$G$-Higgs bundle. \end{proof}
\begin{rem*}
When $G$ is a connected simple real Lie group of Hermitian tube type
then the topological type of the extended $G$-Higgs bundle $(E,\varphi)$
as in Proposition~\ref{existence-unstable-bundle} is maximal, see
\cite[Proposition 7.2]{GPR}. Since we are studying the obstructions
to a deformation retraction from the moduli space of polystable $G$-Higgs
bundles of trivial topological type $\mathcal{M}_{0}(G)$ to $\mathcal{N}_{0}(H^{\mathbb{C}})$
we exclude this case in the above Proposition. 
\end{rem*}
Finally putting our results together we obtain the following theorem.
Note that the moduli spaces are generally singular. 
\begin{thm}
\label{thm:homotopy-type} Let $G$ be a non-abelian (real or complex)
connected reductive Lie group of non-Hermitian type or connected simple
real Lie group of Hermitian non-tube type. Then the moduli space of
semistable principal $H^{\C}$-bundles of trivial topological type
$\mathcal{N}_{0}(H)$ is not a deformation retraction of the moduli
space $\mathcal{M}_{0}(G)$ of semistable $G$-Higgs bundles of trivial
topological type. \end{thm}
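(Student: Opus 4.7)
The plan is to package together the previous results. In view of Proposition~\ref{prop:deformation-nil}, the nilpotent cone $\mathcal{H}^{-1}(0)$ is a deformation retraction of $\mathcal{M}_{0}(G)$, so $\mathcal{M}_{0}(G)$ has the same singular homology as $\mathcal{H}^{-1}(0)$. Consequently, the theorem will follow if I can exhibit a homological obstruction between $\mathcal{N}_{0}(H^{\C})=\mathcal{F}_{0}$ and $\mathcal{M}_{0}(G)$. This is exactly what Corollary~\ref{cor:retraction} provides, once we supply a semistable $G$-Higgs bundle $(E,\varphi)\in\mathcal{M}_{0}(G)$ whose underlying $H^{\C}$-bundle $E$ is unstable.

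Next, I would produce that witness. In the non-Hermitian case this is precisely part (ii) of Proposition~\ref{pro:topological-type}, which uses Proposition~\ref{existence-unstable-bundle} applied to the universal cover $\tilde G\to G$ (so the $\tilde G$-Higgs bundle is automatically of trivial topological type), and then transports the result to $G$ via Propositions~\ref{prop:extended-Higgs-bundle} and~\ref{prop:associating-unstability-principal}. For the Hermitian non-tube case, part (i) of Proposition~\ref{pro:topological-type} — relying on the calculation of topological types carried out in \cite[Proposition 7.1, Proposition 7.2]{GPR} for the $\SL(2,\R)$-extended Higgs bundle of \eqref{basic-SL(2,R)} — gives that the associated $(E,\varphi)$ lies in the trivial component $\mathcal{M}_{0}(G)$. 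In both cases one obtains $(E,\varphi)\in\mathcal{M}_{0}(G)$ semistable with $E$ unstable as an $H^{\C}$-bundle, which is the hypothesis of Corollary~\ref{cor:retraction}.

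With this input, Corollary~\ref{cor:retraction} yields that $\mathcal{M}_{0}(G)$ and $\mathcal{N}_{0}(H^{\C})$ have non-isomorphic additive singular homology with $\C$-coefficients. Since a deformation retraction is in particular a homotopy equivalence and hence induces isomorphisms on singular homology, this incompatibility shows that $\mathcal{N}_{0}(H^{\C})$ cannot be a deformation retract of $\mathcal{M}_{0}(G)$, proving the theorem.

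The conceptual weight has already been discharged in the cited results, so the only real obstacle is a correct case division. The Hermitian tube case is explicitly excluded because there the basic $\SL(2,\R)$-Higgs bundle of Proposition~\ref{existence-unstable-bundle} lands in the maximal (rather than trivial) Toledo component, so part (i) of Proposition~\ref{pro:topological-type} fails and we have no topologically trivial witness from this construction. I would therefore be careful, when writing the argument, to invoke part (i) of Proposition~\ref{pro:topological-type} in the simple Hermitian non-tube case and part (ii) in the (possibly non-simple) non-Hermitian case, and to note as a remark that for $G$ connected reductive complex of non-Hermitian type (viewed as a real reductive group) part (ii) still applies; this matches the statement of the theorem.
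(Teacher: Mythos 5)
Your proposal is correct and follows essentially the same route as the paper, which proves the theorem by combining Corollary~\ref{cor:retraction} (via the deformation retraction onto the nilpotent cone) with the existence of a topologically trivial semistable $G$-Higgs bundle with unstable underlying $H^{\C}$-bundle supplied by Propositions~\ref{existence-unstable-bundle}, \ref{top-type-complex} and~\ref{pro:topological-type}. The only cosmetic difference is that the paper invokes Proposition~\ref{top-type-complex} directly for complex $G$, whereas you subsume that case under part (ii) of Proposition~\ref{pro:topological-type} by viewing $G$ as a real reductive group of non-Hermitian type, which is equally valid.
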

\begin{proof}
Combine Corollary~\ref{cor:retraction}, Proposition~\ref{existence-unstable-bundle},
Proposition~\ref{top-type-complex} and Proposition~\ref{pro:topological-type}. \end{proof}

\end{document}